\newtheorem{thm}{Theorem}[section]
\newtheorem{lem}[thm]{Lemma}
\newtheorem{prop}[thm]{Proposition}
\newtheorem{cor}[thm]{Corollary}
\newtheorem{mainthm}{Theorem}
\theoremstyle{definition}
\newtheorem{defn}[thm]{Definition}
\newcommand{\belowcollar}[4]{
\draw[thick] (#1,#2) arc (0:180:#3 and #4);
\draw [thick,dashed]  (#1,#2) arc (360:180:#3 and #4);
}
\newcommand{\abovecollar}[4]{
\draw[thick,dashed] (#1,#2) arc (0:180:#3 and #4);
\draw [thick]  (#1,#2) arc (360:180:#3 and #4);}
\newcommand{\lazyllipse}[4]{
\draw[thick,thick] (#1,#2) arc (0:360:#3 and #4);}
\newcommand{\blankcircle}[2]{
  \fill[color=white] #1 circle (#2);}
\newcommand{\define}[1]{\textit{#1}}
\newcommand{\bk}[1]{{\langle #1 \rangle}}
\newcommand{\into}{\hookrightarrow}
\newcommand{\onto}{\twoheadrightarrow}
\newcommand{\bdy}{\partial}
\newcommand{\nth}{{\textrm{th}}} 
\def\ncl#1{\mathord{\langle}\mskip -4mu plus 0mu minus 0mu \mathord{\langle}#1\mathord{\rangle}\mskip -4mu plus 0mu minus 0mu \mathord{\rangle}}
\newcommand{\freegrp}{\mathbb{F}}
\newcommand{\free}{\freegrp}
\newcommand{\cl}[1]{{#1}^\star}
\newcommand{\Z}{\mathbb{Z}}
\newcommand{\Udder}{\ensuremath{\mathbb{U}}}
\newcommand{\fcs}{$\free$-conjugacy separable}
\newcommand{\fcsy}{$\free$-conjugacy separability}
\newcommand{\frcs}{freely conjugacy separable}
\newcommand{\frcsy}{free conjugacy separability}
\newcommand{\idouble}{\ensuremath{L}}
\begin{document}

\author{Larsen Louder \& Nicholas W.M. Touikan}

\title{Magnus pairs in, and free conjugacy separability of, limit groups}

\maketitle

\begin{abstract}
  There are limit groups having non-conjugate elements whose images
  are conjugate in every free quotient. Towers over free groups are
  freely conjugacy separable.
\end{abstract}

\section{Introduction}

This paper is concerned with the problem of finding free quotients of
finitely generated groups in which non-conjugate elements have
non-conjugate images. Given a finitely generated group $G$ which is
not a limit group, there is a finite collection of limit group
quotients $G\onto L_1,\dotsc,G\onto L_n$ of $G$ such that every
homomorphism $G\to\free$, where $\free$ is a free group, factors
through one of the factor groups $G\onto L_i$, hence it suffices to
consider the problem only for limit groups.

We are reduced then to the problem of finding free quotients of limit
groups in which non-conjugate elements have non-conjugate images. A
group is \emph{freely conjugacy separable}, or \fcs, if for any pair
$u,v \in G$ of non-conjugate elements there is a homomorphism to some
free group $G\to\freegrp$ such that the images of $u$ and $v$ in $\freegrp$
are non-conjugate.

We will give two different types of examples of limit groups which are
not \fcs\ for entirely different reasons. In Section
\ref{sec:magnus-pairs} we produce a limit group $L$ with elements
$u,v$ such that the cyclic groups $\bk{u},\bk{v}$ are non-conjugate,
but whose normal closures $\ncl u$ and $\ncl v$ coincide. We call such
a pair of elements a \define{Magnus pair} (see Definition
\ref{defn:Magnus-pair}.) Such elements must have conjugate images in
any free quotient by a theorem of Magnus \cite{Magnus-1931}. In
Section \ref{sec:other-case} we construct a limit group which is a
double of a free group over a cyclic group generated by a $C$-test
word (see Definition \ref{def:c-test}). These limit groups, called
\define{$C$-doubles}, are low rank and we are able to construct
their Makanin-Razborov diagrams encoding all homomorphisms into any
free group and directly observe the failure of free conjugacy
separability. This limit group was independently discovered and
studied by Simon Heil \cite{heil2016jsj}, who published a preprint
while this paper was in preparation. He uses this limit group to show
that limit groups are not freely subgroup separable.

\begin{defn}\label{defn:discriminating}
  A sequence of homomorphisms $\{\phi_i\colon G \to H\}$ is
  \define{discriminating} if for every finite subset $P \subset G
  \setminus \{1\}$ there is some $N$ such that for all $j \geq N, 1
  \not\in \phi_j(P)$.
\end{defn}

\begin{defn}
  A finitely generated group $L$ is a \define{limit group} if there is
  a \define{discriminating} sequence of homomorphisms $\{\phi_i\colon L \to
  \freegrp\}$, where $\freegrp$ is a free group.
\end{defn}

\begin{mainthm}\label{thm:not-conj-res-free}
  The class of limit groups is not \frcs.
\end{mainthm}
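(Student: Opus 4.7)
The plan is to exhibit a specific limit group that fails to be freely conjugacy separable, following the two complementary constructions outlined in the introduction. The cleaner approach is via a Magnus pair: produce a limit group $L$ together with elements $u, v$ whose cyclic subgroups $\bk u, \bk v$ are non-conjugate in $L$, but whose normal closures $\ncl u, \ncl v$ coincide. Non-conjugacy of $\bk u$ and $\bk v$ in particular means $u$ is conjugate in $L$ to neither $v$ nor $v^{-1}$. For any homomorphism $\phi\colon L \to \freegrp$ to a free group, pushing the equality of normal closures forward gives $\ncl{\phi u} = \ncl{\phi v}$ inside the free group $\phi(L)\le \freegrp$, and Magnus's 1931 theorem \cite{Magnus-1931} then forces $\phi u$ to be conjugate in $\freegrp$ to $\phi v^{\pm 1}$. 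A short but delicate argument, exploiting that a non-trivial element of a free group is never conjugate to its inverse and partitioning the homomorphisms $L\to\freegrp$ by the sign $\pm$, is then used to extract a single non-conjugate pair in $L$ (either $(u,v)$ or $(u,v^{-1})$, or a related combination built from $u$ and $v$) whose images are conjugate in every free quotient. This directly violates \frcsy.

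The principal obstacle is the construction of the limit group $L$ carrying a Magnus pair. Limit groups are fully residually free, which is very restrictive, so one cannot simply lift classical Magnus-pair examples from one-relator theory; in particular the relations needed to make a candidate group a limit group tend to destroy the Magnus-pair property. My attempt would be to build $L$ as a small graph of free groups, for instance an HNN extension or amalgamated free product along cyclic edge subgroups, engineered so that two elements lying in distinct conjugacy classes of cyclic subgroups happen to normally generate the same subgroup of $L$, while a Bass-Serre tree analysis rules out direct conjugacy of the cyclic subgroups themselves. Checking that $L$ is a limit group then reduces to producing a discriminating sequence $L\to \freegrp$, typically manufactured via Dehn twists along the edge groups and applying Baumslag-style residual freeness arguments to the resulting family.

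As a second, independent route (developed in Section \ref{sec:other-case}), one builds a $C$-double of a free group over a cyclic subgroup generated by a $C$-test word, computes its Makanin-Razborov diagram explicitly (feasible due to low rank and the strong conjugacy-control properties of $C$-test words), and reads off the failure of \frcsy\ directly from the structure of the diagram by exhibiting a concrete non-conjugate pair whose images are conjugate under every parameterizing map. The main difficulties on this route are establishing the required separation properties of $C$-test words and verifying that the MR diagram is exhaustive; either construction by itself suffices to prove the theorem.
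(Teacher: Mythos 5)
Your overall strategy --- exhibit a limit group with a Magnus pair and invoke Magnus's theorem --- is exactly the route the paper takes in Section~\ref{sec:magnus-pairs}, so the architecture is right. However, there are two genuine gaps, one major and one minor.

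The major gap is that you never actually produce the limit group carrying a Magnus pair; this construction is the entire content of the proof and is not routine. The paper builds $\pi_1(\Udder)$ as the fundamental group of a graph of spaces with two four-punctured spheres $\Sigma_u,\Sigma_v$ and two circles $\bk u,\bk v$, glued along degree-one boundary identifications so that in $\pi_1(\Sigma_u)=\bk{a,b,c,d\mid abcd=1}$ three boundary classes $a,b,c$ are conjugate to $v^{\pm1}$ and $d=abc$ is conjugate to $u^{\pm1}$, and symmetrically in $\Sigma_v$; this gives $u\in\ncl v$ and $v\in\ncl u$, while a Bass--Serre argument shows $u\not\sim_{\pm}v$ (Lemma~\ref{lem:uv-Magnus-pair}). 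Your proposal to ``engineer'' an HNN extension or amalgam and verify residual freeness by a Baumslag-style Dehn-twist argument is a placeholder, not a proof; moreover, the paper avoids the Baumslag route entirely and instead produces a single strict (in the sense of Definition~\ref{defn:strict}) surjection $\pi_1(\Udder)\onto\freegrp_3$ (Lemma~\ref{lem:map-is-nice}) and invokes the Champetier--Guirardel criterion (Proposition~\ref{prop:strict-limit}), which is both cleaner and what actually certifies $\pi_1(\Udder)$ as a limit group. The minor gap is the $\pm$-sign issue, which you correctly flag but do not resolve: Magnus's theorem only forces $\phi(u)\sim\phi(v)^{\pm1}$, and to violate the definition of \frcsy\ one needs a \emph{single} non-conjugate pair whose images are conjugate under \emph{every} $\phi$, which requires showing the sign cannot flip between different homomorphisms. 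Your claim that ``a short but delicate argument'' partitioning homomorphisms by sign settles this is an assertion, not an argument --- a priori both sign classes could be nonempty, in which case $(u,v)$ would be separated by one class and $(u,v^{-1})$ by the other. In the paper this is handled implicitly by the orientations in Figure~\ref{fig:1} and stated outright in Proposition~\ref{prop:counter-eg} (that $\rho(u)\sim\rho(v)$, without a $\pm$), but you would need to justify it for whatever group you construct. Your alternative route via $C$-doubles is indeed the content of Section~\ref{sec:other-case} and does work, but again you only sketch it.
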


This should be seen in contrast to the fact that limit groups are
conjugacy separable \cite{C-Z-separable}. Furthermore Lioutikova in
\cite{Lioutikova-CRF} proves that iterated centralizer extensions (see
Definition \ref{defn:tower}) of a free group $\freegrp$ are \fcs. It
is a result of of Kharlampovich and Miasnikov \cite{KM-IrredII} that
all limit groups embed in to iterated centralizer extensions. Moreover
by \cite[Theorem 5.3]{gaglione2009almost} almost locally free groups
\cite[Definition 4.2]{gaglione2009almost} cannot have Magus
pairs. This class includes the class of limit groups which are
$\forall\exists$-equivalent to free groups. The class of iterated
centralizer extensions and the class of limit groups
$\forall\exists$-equivalent to free groups are contained in the class
of towers, also known as NTQ groups. We generalize these previous
results to the class of towers with the following strong \fcsy\
result:

\begin{mainthm}\label{thm:generic-sequence}
  Let $\freegrp$ be a non-abelian free group and let $G$ be a tower
  over $\freegrp$ (see Definition \ref{defn:tower}). There is a
  discriminating sequence of retractions $\{\phi_i\colon G \onto
  \freegrp\}$, such that for any finite subset $S \subset G$ of
  pairwise non-conjugate elements, there is some positive integer $N$
  such that for all $j \geq N$ the elements of $\phi_j(S)$ are
  pairwise non-conjugate in $\freegrp$. Similarly for any indivisible
  $\gamma \in L$ with cyclic centralizer there is some positive
  integer $M$ such that for all $k \geq M$, $r_k(\gamma)$ is
  indivisible.
\end{mainthm}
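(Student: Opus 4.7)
The plan is to induct on the height $h$ of the tower $G$ over $\freegrp$. When $h=0$ we have $G=\freegrp$ and the constant sequence $\phi_i=\mathrm{id}$ satisfies all conclusions trivially. For the inductive step, $G$ is obtained from a tower $G'$ of height $h-1$ by attaching one top floor: either an \emph{abelian floor} $G = G' *_{\bk{c}}(\bk{c}\oplus \Z^k)$ with $c\in G'$, or a \emph{quadratic floor} in which a surface group $\pi_1(\Sigma)$ is glued to $G'$ along its boundary components. The tower definition supplies a canonical retraction $r\colon G\onto G'$. I will construct a parametrized family $r_\lambda\colon G\onto G'$, $\lambda\in\Z^k$ with $\min_i\lambda_i$ large, and diagonalize it against the discriminating retractions $\phi_i'\colon G'\onto \freegrp$ given by induction to produce the required sequence $\phi_i\colon G\onto \freegrp$.

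For an abelian floor, let $r_\lambda$ send the $i$-th free abelian generator of $\Z^k$ to $c^{\lambda_i}$. Baumslag's lemma on big powers in centralizer extensions ensures that as $\min_i|\lambda_i|\to\infty$, every Bass--Serre normal-form word for the splitting $G = G' *_{\bk{c}}(\bk{c}\oplus\Z^k)$ maps to a product in $G'$ that is \emph{rigid}: its length, cyclic conjugates, centralizer, and indivisibility in $G'$ are computable from those of its syllables. For a quadratic floor one uses instead a Dehn-twist family $r_\lambda$ along a filling system of disjoint essential simple closed curves on $\Sigma$, pre-composed with $r$; the analogous rigidity is the test-sequence statement in the style of Rips--Sela and Kharlampovich--Miasnikov. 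In both cases the family $\{r_\lambda\}$ is itself discriminating, and a standard diagonal against $\{\phi_i'\}$ yields a discriminating sequence of retractions $G\onto \freegrp$.

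Given a finite set $S\subset G$ of pairwise non-conjugate elements, write each $s\in S$ in Bass--Serre normal form relative to the top splitting. Non-conjugacy in $G$ is detected by cyclic rotation of this form together with conjugation of the $G'$-syllables by the edge group $\bk c$ (or boundary elements, in the quadratic case). For $\lambda$ with $\min_i\lambda_i$ sufficiently large, rigidity guarantees that if $r_\lambda(s)$ and $r_\lambda(s')$ are conjugate in $G'$ then $s$ and $s'$ are already conjugate in $G$; hence $r_\lambda(S)$ is a pairwise non-conjugate finite subset of $G'$, and the inductive hypothesis applied to it produces an $N$ such that $\phi_j'\circ r_\lambda(S)$ is pairwise non-conjugate in $\freegrp$ for all $j\ge N$. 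The indivisibility clause is handled identically: if $\gamma\in G$ is indivisible with cyclic centralizer, rigidity forces $r_\lambda(\gamma)\in G'$ to be indivisible with cyclic centralizer for large $\lambda$, and the induction terminates in $\freegrp$, where the claim is automatic for nontrivial elements.

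The main obstacle is establishing the rigidity of $r_\lambda$ at a quadratic floor: large-exponent Dehn twists could in principle create spurious conjugacies between surface-group elements with different boundary interaction patterns, or generate unexpected roots. Controlling this requires a careful analysis of how the twist exponents interact with the Bass--Serre normal form over the lower floor, in the spirit of Sela's shortening/test-sequence machinery, and is where the bulk of the technical work will lie. Once this surface-floor rigidity is in place, the abelian case reduces to Baumslag's classical lemma and the induction runs cleanly.
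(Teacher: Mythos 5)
Your proposal takes a genuinely different route from the paper, and it has a real gap that you yourself flag.

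\textbf{Route difference.} The paper does not try to construct test sequences directly. Instead it black-boxes the hard analytic content into the $\forall\exists$-lifting lemma of Sela and Kharlampovich--Miasnikov (Lemma~\ref{lem:lift}): if no retraction separating the conjugacy classes of $S$ existed, one could write a $\forall\exists$-sentence true in $\freegrp$, and the lifting lemma would then produce a \emph{closure} $\cl G$ of the tower in which two non-conjugate elements of $G$ become conjugate. The remaining work (Proposition~\ref{prop:towers-freely-conj-sep}) is a Bass--Serre tree argument, by induction on tower height, showing that a tower embedding $G\into\cl G$ can never create new conjugacies; the crucial input is that, by Definition~\ref{defn:closure}, edge orbits and edge-stabilizer surjectivity are preserved at each floor, so a conjugating element can be pushed into $G$. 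Theorem~\ref{thm:generic-sequence} then follows from Proposition~\ref{prop:towers-freely-conj-sep} by a diagonal over an exhaustion by representatives of conjugacy classes. Your plan instead attempts to build the discriminating sequence by hand (big powers at abelian floors, Dehn twists at quadratic floors) and prove a conjugacy-rigidity statement for these maps. This is essentially the ``test sequence'' / ``generic sequence'' programme of Sela and of Kharlampovich--Miasnikov that the paper explicitly mentions in passing and then deliberately avoids; the paper's approach buys a much shorter argument at the cost of invoking the implicit function theorem machinery as a theorem.

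\textbf{The gap.} You acknowledge that the quadratic-floor rigidity (``large-exponent Dehn twists do not create spurious conjugacies or roots'') is the bulk of the technical work, but you do not establish it, and it cannot be waved through: this is precisely the content of the test-sequence shortening analysis and is at least as hard as the statement being proved. There is no elementary ``Baumslag-style'' substitute at a surface floor, since Dehn twist families interact nontrivially with the Bass--Serre normal form over the lower floor and with the interior geometry of the QH vertex. A second, smaller issue: even granting rigidity, your diagonalization needs care, because the $N$ produced by the inductive hypothesis for $r_\lambda(S)\subset G'$ depends on $\lambda$; you need to interleave an exhaustion of conjugacy-class representatives with increasing twist parameters (as the paper does via Proposition~\ref{prop:towers-freely-conj-sep} and the diagonal in the proof of Theorem~\ref{thm:generic-sequence}) to get a \emph{single} sequence that works for all finite $S$ simultaneously. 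Finally, the abelian-floor step is fine in outline, but Baumslag's lemma gives you injectivity of the big-powers retraction on normal forms; promoting that to ``non-conjugate stays non-conjugate'' still requires a separate argument comparing cyclic normal forms and edge-group conjugation, which you assert rather than prove.
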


This theorem also settles \cite[Question 7.1]{gaglione2009almost},
which asks if arbitrarily large collections of pairwise nonconjugate
elements can have pairwise nonconjugate images via a homomorphism to a
free group. The proof of Theorem \ref{thm:generic-sequence} is in
Section \ref{sec:towers-fcs} and follows from results of Sela
\cite{Sela-Dioph-II} and Kharlampovich and Myasnikov \cite{KM-ift},
which form the first step in their (respective) systematic studies of
the $\forall\exists$-theory of free groups.

Finally, in Section \ref{sec:refinements} we analyze the failure of
\frcsy\ of our limit group with a Magnus pair and show that this is
very different from $C$-double constructed in Section
\ref{sec:other-case}. We then show that the \frcsy\ does not isolate
the class of towers within the class of limit groups.

Throughout this paper, unless mentioned otherwise, $\freegrp$ will
denote a non-abelian free group, $\freegrp_n$ will denote a
non-abelian free group of rank $n$, and $\freegrp(X)$ will denote the
free group on the basis $X$.

\section{A limit group with a Magnus pair}\label{sec:magnus-pairs}

Consider the of the fundamental group of the graph of spaces $\Udder$
given in Figure \ref{fig:1}.
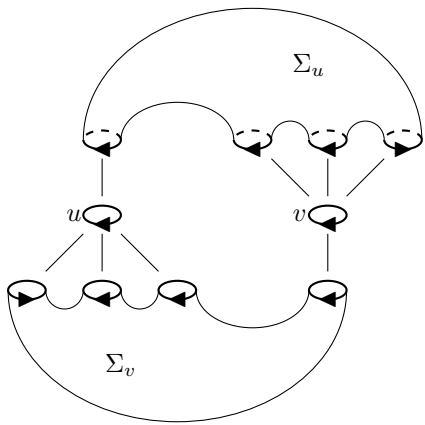
\begin{figure}[htb]
\centering
\begin{tikzpicture}[scale=0.5]
  \abovecollar{4}{2}{0.5}{0.25}
  \draw (3.5,1.75) node {${\blacktriangleleft}$};
  \abovecollar{8}{2}{0.5}{0.25}
  \draw (7.5,1.75) node {${\blacktriangleleft}$};
  \abovecollar{10}{2}{0.5}{0.25}
  \draw (9.5,1.75) node {${\blacktriangleleft}$};
  \abovecollar{12}{2}{0.5}{0.25}
  \draw (11.5,1.75) node {${\blacktriangleright}$};

  \lazyllipse{2}{-2}{0.5}{0.25}
  \draw (1.5,-2.25) node {${\blacktriangleright}$};
  \lazyllipse{4}{-2}{0.5}{0.25}
  \draw (3.5,-2.25) node {${\blacktriangleleft}$};
  \lazyllipse{6}{-2}{0.5}{0.25}
  \draw (5.5,-2.25) node {${\blacktriangleleft}$};
  \lazyllipse{10}{-2}{0.5}{0.25}
  \draw (9.5,-2.25) node {${\blacktriangleleft}$};

  \draw (3,2) arc (180:0:4.5 and 3.5);
  \draw (4,2) arc (180:0:1.5 and 1);
  \draw (8,2) arc (180:0:0.5 and 0.5);
  \draw (10,2) arc (180:0:0.5 and 0.5);

  \draw (10,-2) arc (360:180:4.5 and 3.5);
  \draw (9,-2) arc (360:180:1.5 and 1);
  \draw (5,-2) arc (360:180:0.5 and 0.5);
  \draw (3,-2) arc (360:180:0.5 and 0.5);
  
  \lazyllipse{4}{0}{0.5}{0.25}
  \lazyllipse{10}{0}{0.5}{0.25}
    
  \draw (3.5,1.5) -- (3.5,0.5)
  (3,-0.5) -- (2,-1.5)
  (3.5,-0.5) -- (3.5,-1.5)
  (4,-0.5) -- (5,-1.5);
  
  \draw (9.5,-1.5) -- (9.5,-0.5)
  (9,0.5) -- (8,1.5)
  (9.5,0.5) -- (9.5,1.5)
  (10,0.5) -- (11,1.5);
  
  \draw (3.5,-0.25) node{$\large{\blacktriangleleft}$}
  (2.75,0) node{$u$};

  \draw (9.5,-0.25) node{$\large{\blacktriangleleft}$}
  (8.75,0) node{$v$};
  \draw(9,4) node {${\Sigma_u}$};
  \draw(4,-4) node {${\Sigma_v}$};
\end{tikzpicture}
\caption{The graph of spaces $\Udder$. The attaching maps are of
  degree 1 and the black arrows show the orientations.}
\label{fig:1}
\end{figure}
We pick elements $u,v \in \pi_1(\Udder)$ corresponding to the
similarly labelled loops given in Figure \ref{fig:1} and we also
consider groups $\pi_1(\Sigma_u),\pi_1(\Sigma_v)$ to be embedded into
$\pi_1(\Udder)$. 

\begin{defn}
  \label{defn:Magnus-pair}
  Let $G$ be a group, and let $\sim_{\pm}$ be the equivalence relation
  $g\sim_{\pm}h$ if and only if $g$ is conjugate to $h$ or $h^{-1}$, and denote
  by $\left[g\right]_{\pm}$ the $\sim_{\pm}$ equivalence class of
  $g$. A \emph{Magnus pair} is a pair of $\sim_{\pm}$ classes
  $\left[g\right]_{\pm}\neq\left[h\right]_{\pm}$ such that
  $\ncl{g}=\ncl{h}$.
\end{defn}

Note that if $h\in\left[g\right]_{\pm}$ then $\ncl{g}=\ncl{h}$, and
that the relation ``have the same normal closure'' is coarser than
$\sim_{\pm}$, and if a group has a Magnus pair then it is strictly
coarser than $\sim_{\pm}$. To save notation we will say that $g$ and
$h$ are a Magnus pair if their corresponding equivalence classes are.

\begin{lem}\label{lem:uv-Magnus-pair}
  The elements $u$ and $v$ in $\pi_1(\Udder)$ are a Magnus pair.
\end{lem}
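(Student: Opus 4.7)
The plan is to read the graph-of-groups structure $\mathcal{G}$ for $\pi_1(\Udder)$ directly off Figure \ref{fig:1} and verify the two conditions of Definition \ref{defn:Magnus-pair} separately. The underlying graph has two surface vertex groups $\pi_1(\Sigma_u), \pi_1(\Sigma_v)$ (free of rank $3$, each $\Sigma_\ast$ being a $4$-holed sphere) and two cyclic vertex groups $\bk{u}, \bk{v}$; every tube is an edge carrying a $\Z$ edge group, and since the attaching maps have degree $\pm 1$ the edge groups incident to $\bk{u}$ and $\bk{v}$ are the full cyclic groups. Of the four boundary components of $\Sigma_u$, one is identified with $u$ and three with conjugates of $v^{\pm 1}$, and symmetrically for $\Sigma_v$. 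Writing the four boundaries of $\Sigma_u$ as $u, \beta_1, \beta_2, \beta_3$, the $4$-holed-sphere relation $u\beta_1\beta_2\beta_3 = 1$ in $\pi_1(\Sigma_u)$ exhibits $u$ as a product of conjugates of $v^{\pm 1}$ in $\pi_1(\Udder)$, so $u \in \ncl v$; symmetrically $v \in \ncl u$, hence $\ncl u = \ncl v$.

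For the condition $[u]_\pm \ne [v]_\pm$ I would study the action of $\pi_1(\Udder)$ on the Bass-Serre tree $T$ of $\mathcal{G}$. Let $x_u, x_v \in T$ be vertices with stabilizers $\bk{u}, \bk{v}$. The key step is to show that $\mathrm{Fix}_T(u)$ is exactly the closed star of $x_u$: it contains this star because $u$ lies in every edge group incident to $x_u$, and it cannot extend further, because to fix a vertex at distance $2$ the element $u$ would have to lie in an edge group generated by a different boundary curve of $\Sigma_u$ or $\Sigma_v$, yet in the free fundamental group of a $4$-holed sphere distinct boundary curves are primitive and pairwise non-commensurable. By symmetry $\mathrm{Fix}_T(v)$ is the closed star of $x_v$. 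If $u = g v^{\pm 1} g^{-1}$ then $g \cdot x_v \in \mathrm{Fix}_T(u)$, so $g \cdot x_v$ lies in the star of $x_u$; but vertex orbits under the action on $T$ correspond to vertices of $\mathcal{G}$, and no neighbor of $x_u$ in $T$ is of type $x_v$, a contradiction.

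The technical heart of the argument is the fixed-subtree computation, and it reduces to the standard fact that the boundary curves of a $4$-holed sphere are primitive and pairwise non-commensurable in its free fundamental group; once this is in hand the Bass-Serre argument is routine.
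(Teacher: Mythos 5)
Your proof is correct and follows the same route as the paper: the normal-closure equality $\ncl u = \ncl v$ is read off the surface relations exactly as in the paper, and the non-conjugacy is argued via the Bass--Serre tree, which the paper simply asserts is ``easily seen.'' One small caveat in the fixed-subtree step: ruling out that $u$ fixes a second vertex adjacent to a surface vertex $y$ requires not only that \emph{distinct} boundary curves of the four-holed sphere be non-commensurable, but also that each boundary subgroup be malnormal in $\pi_1(\Sigma_\ast)$, since two edges at $y$ could lie over the \emph{same} edge of the quotient graph; this is standard for surfaces with $\chi\le -1$, but as written your argument only addresses the ``different boundary curve'' case.
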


\begin{proof}
  The graph of spaces given in Figure \ref{fig:1} gives rise to a
  cyclic graph of groups splitting $D$ of $\pi_1(\Udder)$. The
  underlying graph $X$ has 4 vertices and 8 edges where the vertex
  groups are $\bk{u},\bk{v},\pi_1(\Sigma_u)$, and
  $\pi_1(\Sigma_v)$. Now note that $\pi_1(\Sigma_u)$ can be given the
  presentation
  \[ \pi_1(\Sigma_u) = \bk{a,b,c,d \mid abcd=1} = \bk{a,b,c}\] and
  that the incident edge groups have images
  $\bk{a},\bk{b},\bk{c},\bk{abc} = \bk{d}$. Without loss of generality
  $v^{\pm 1}$ is conjugate to $a$,$b$, and $c$ in $\pi_1(\Udder)$ and
  $u^{\pm 1}$ is conjugate to $d = abc$ in $\pi_1(\Udder)$ which means
  that $u\in \ncl{v}$ and, symmetrically considering $\Sigma_v$,
  $v \in \ncl{u}$.

  On the other hand, the elements $\bk{a},\bk{b},\bk{c},\bk{abc}$ are
  pairwise non-conjugate in $\bk{a,b,c}$ and we now easily see that
  $u$ and $v$ are non-conjugate by considering the action on the
  Bass-Serre tree. $u$ and $v$ therefore form a Magnus pair.
\end{proof}

\subsection{Strict homomorphisms to limit groups}

\begin{defn}\label{defn:QH}
  Let $G$ be a finitely generated group and let $D$ be a
  2-acylindrical cyclic splitting of $G$. We say that a vertex group
  $Q$ of $D$ is \define{quadratically hanging (QH)} if it satisfies
  the following:
  \begin{itemize}
  \item $Q = \pi_1(\Sigma)$ where $\Sigma$ is a compact surface such
    that $\chi(\Sigma) \leq -1$, with equality only if $\Sigma$ is
    orientable or $\bdy(\Sigma)\neq \emptyset$.
  \item The images of the edge groups incident to $Q$ correspond to
    the $\pi_1$-images of $\bdy(\Sigma)$ in $\pi_1(\Sigma)$.
  \end{itemize} 
\end{defn}

\begin{defn}\label{defn:strict}
  Let $G$ be torsion-free group.  A homomorphism $\rho\colon G \to H$ is
  \define{strict} if there some 2-acylindrical abelian
  splitting $D$ of $G$ such that the following hold:
  \begin{itemize}
  \item $\rho$ is injective on the subgroup $A_D$ generated by the
    incident edge groups of each each abelian vertex group $A$ of $D$.
  \item $\rho$ is injective on each edge group of $D$.
  \item $\rho$ is injective on the ``envelope'' $\hat{R}$ of each
    non-QH, non-abelian vertex group $R$ of $D$, where $\hat{R}$ is
    constructed by first replacing each abelian vertex group $A$ of
    $D$ by $A_D$ and then taking $\hat{R}$ to be the subgroup
    generated by $R$ and the centralizers of the edge groups incident
    to $R$.
  \item the $\rho$-images of QH subgroups are non-abelian.
  \end{itemize}
\end{defn}

This next Proposition is a restatement of Proposition 4.21 of
\cite{CG-limits} in our terminology. It is also given as Exercise 8 in
\cite{BF-notes,Wilton-solutions}.

\begin{prop}\label{prop:strict-limit}
  If $L$ is a limit group, $G$ is some finitely generated group such
  that there is a strict homomorphism $\rho: G \to L$, then $G$ is also
  limit group.
\end{prop}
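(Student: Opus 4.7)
The plan is to build a discriminating sequence $\{\phi_n \colon G \to \freegrp\}$ by precomposing $\rho$ with carefully chosen automorphisms of $G$ and postcomposing with a discriminating sequence $\{\psi_i \colon L \to \freegrp\}$ for $L$, which exists since $L$ is a limit group. Associated to the splitting $D$ I would consider the group $\mathrm{Mod}(D)$ of automorphisms of $G$ generated by Dehn twists along edges of $D$, mapping class elements of each QH vertex surface that fix its boundary curves up to conjugacy, and automorphisms of each abelian vertex group $A$ that are the identity on the peripheral subgroup $A_D$. By construction every $\alpha \in \mathrm{Mod}(D)$ restricts to an inner automorphism on each rigid vertex group, on its envelope, and on each edge group, so the injectivity statements in Definition \ref{defn:strict} are preserved under precomposition with any element of $\mathrm{Mod}(D)$.

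The heart of the argument is the following claim: for every finite $P \subset G \setminus \{1\}$ there is some $\alpha \in \mathrm{Mod}(D)$ with $1 \notin \rho(\alpha(P))$. Writing each $g \in P$ in Bass-Serre normal form for the tree dual to $D$, I would argue case by case. If $g$ is elliptic in a rigid vertex, strictness gives $\rho(g) \neq 1$ regardless of $\alpha$. If $g$ is elliptic in a QH vertex $Q$, I would use that $\rho(Q)$ is non-abelian to find a mapping class element pushing $g$ out of $\ker(\rho|_Q)$. If $g$ is elliptic in an abelian vertex, I would use injectivity of $\rho$ on $A_D$ together with automorphisms of $A$ trivial on $A_D$ to separate. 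If $g$ is hyperbolic, applying large powers of Dehn twists along the edges traversed by $g$ makes $\alpha(g)$ a long alternating word whose envelope syllables and edge-group syllables are mapped injectively by $\rho$; a normal-form or ping-pong argument in the splitting of $\rho(G) \leq L$ induced by $D$ then gives $\rho(\alpha(g)) \neq 1$.

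Given the claim, the rest is formal: enumerate a chain $P_1 \subset P_2 \subset \cdots$ of finite subsets exhausting $G \setminus \{1\}$, pick $\alpha_n \in \mathrm{Mod}(D)$ with $1 \notin \rho(\alpha_n(P_n))$, and then pick $i(n)$ so that $1 \notin \psi_{i(n)}(\rho(\alpha_n(P_n)))$; the diagonal sequence $\phi_n = \psi_{i(n)} \circ \rho \circ \alpha_n$ is discriminating, showing that $G$ is a limit group. The main obstacle is the QH elliptic case of the key claim: converting the qualitative hypothesis that $\rho(Q)$ is non-abelian into a concrete mapping class element that moves a prescribed element out of $\ker(\rho|_Q)$ is the one place where strictness is genuinely used rather than merely bookkept, and it requires some care about how the mapping class group of the surface acts on conjugacy classes in $\pi_1(\Sigma)$; every other case reduces essentially to unwinding the injectivity clauses of Definition \ref{defn:strict}.
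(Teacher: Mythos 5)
The paper does not supply its own proof of Proposition~\ref{prop:strict-limit}: it simply cites it as a restatement of Proposition 4.21 of \cite{CG-limits}, also appearing as Exercise 8 in \cite{BF-notes,Wilton-solutions}. Your proposal is a reconstruction of the standard argument behind those references — precompose with the modular group $\mathrm{Mod}(D)$ of the splitting, postcompose with a discriminating family $L\to\freegrp$, and diagonalize — so in spirit it agrees with what the paper is invoking, and the skeleton (rigid case is immediate from injectivity on envelopes; separate handling of QH, abelian, and hyperbolic elements) is the right one.

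Two of your case analyses, however, are not yet proofs. First, in the hyperbolic case you appeal to ``a normal-form or ping-pong argument in the splitting of $\rho(G)\leq L$ induced by $D$,'' but a strict $\rho$ does not in general induce a graph-of-groups splitting of the image $\rho(G)$; injectivity on envelopes and edge groups is far weaker than injectivity on vertex groups or on $G$. The standard fix is to work not in $L$ but in the free group target of a discriminating sequence $\psi_i\colon L\to\freegrp$, and to use Baumslag's lemma (if $c,a$ in a free group do not commute and $a\notin\langle c\rangle$, then $c^{n_1}ac^{n_2}a\cdots$ is nontrivial for all large $n_i$) to show that, for suitably large twist parameters and suitably large $i$, the image $\psi_i\rho\alpha(g)$ is a long alternating product that cannot vanish. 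This requires coordinating the choice of twist powers with the choice of $\psi_i$, which is exactly where the ``strict'' hypotheses are consumed. Second, you are right that the QH case is the crux, but ``some care about how the mapping class group acts on conjugacy classes'' understates it: you must prove that for a surface group $\pi_1(\Sigma)$ with non-abelian image in a limit group and with boundary classes mapping to nontrivial (and hence infinite-order) elements, the $\mathrm{MCG}(\Sigma,\partial\Sigma)$-orbit of any nontrivial $g$ escapes $\ker(\rho|_Q)$. This is genuinely nontrivial — a priori $\ker(\rho|_Q)$ could be a characteristic subgroup — and the usual route is to first replace $\rho|_Q$ by a pinching map onto a free group and then prove the statement for such maps. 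Your abelian case has a similar but smaller gap: one needs $A_D$ to be a direct summand (or to pass to a finite-index situation) so that a shear $a'\mapsto a'+\phi(a')$ with $\phi\colon A/A_D\to A_D$ can be chosen to defeat any finite subset of $\ker(\rho|_A)$ using injectivity on $A_D$. So: right approach, matching the cited sources, but the hyperbolic and QH steps as written would not survive a careful reading.
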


\subsection{$\pi_1(\Udder)$ is a limit group but it is not \frcs.}\label{sec:not-fcs} 

Consider the sequence of continuous maps given in Figure \ref{fig:2}.
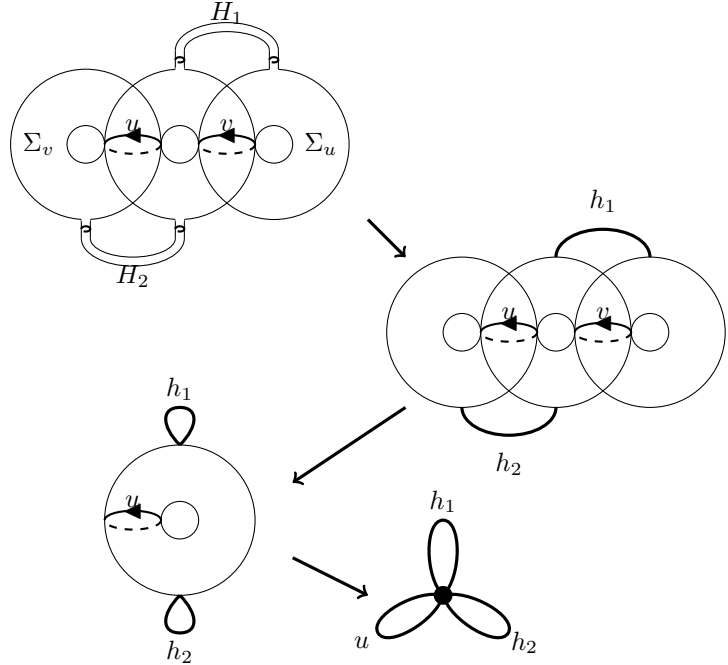
\begin{figure}[htb]
\centering

\begin{tikzpicture}[scale=0.5]
  \draw (-2.5,0) circle (2);
  \draw (-2.5,0) circle (0.5);
  \draw (0,0) circle (2);
  \draw (0,0) circle (0.5);
  \draw (2.5,0) circle (2);
  \draw (2.5,0) circle (0.5);
  \belowcollar{-0.5}{0}{0.75}{0.25}
  \draw (1.25,0.25) node {$\blacktriangleleft$}
  (1.25,0.5) node {$v$};
  \belowcollar{2}{0}{0.75}{0.25}
  \draw (-1.25,0.25) node {$\blacktriangleleft$}
  (-1.25,0.5) node {$u$};
  \draw (-0.12,2) -- (-0.12,2.5);
  \draw (0.12,2) -- (0.12,2.5);
  \blankcircle{(0,2)}{0.12}
  \draw (-2.62,-2) -- (-2.62,-2.5);
  \draw (-2.38,-2) -- (-2.38,-2.5);
  \blankcircle{(2.5,2)}{0.12}
  \draw (-0.12,-2) -- (-0.12,-2.5);
  \draw (0.12,-2) -- (0.12,-2.5);
  \blankcircle{(0,-2)}{0.12}
  \draw (2.38,2) -- (2.38,2.5);
  \draw (2.62,2) -- (2.62,2.5);
  \blankcircle{(-2.5,-2)}{0.12}
  \draw (-0.12,2.5) arc (180:0:1.37 and 0.74);
  \draw (0.12,2.5) arc (180:0:1.13 and 0.5);
  \draw (-0.12,-2.5) arc (360:180:1.13 and 0.5);
  \draw (0.12,-2.5) arc (360:180:1.37 and 0.74);
  \belowcollar{0.12}{2.25}{0.12}{0.07}
  \belowcollar{-2.38}{-2.25}{0.12}{0.07}
  \belowcollar{0.12}{-2.25}{0.12}{0.07}
  \belowcollar{2.62}{2.25}{0.12}{0.07}
  \draw (1.25,3.5) node {$H_1$}
  (-1.25,-3.5) node {$H_2$}
  (3.75,0) node {$\Sigma_u$}
  (-3.75,0) node {$\Sigma_v$};  
  \begin{scope}[shift={(10,-5)}]
    \draw (-2.5,0) circle (2);
    \draw (-2.5,0) circle (0.5);
    \draw (0,0) circle (2);
    \draw (0,0) circle (0.5);
    \draw (2.5,0) circle (2);
    \draw (2.5,0) circle (0.5);
    \belowcollar{-0.5}{0}{0.75}{0.25}
    \draw (1.25,0.25) node {$\blacktriangleleft$}
    (1.25,0.5) node {$v$};
    \belowcollar{2}{0}{0.75}{0.25}
    \draw (-1.25,0.25) node {$\blacktriangleleft$}
    (-1.25,0.5) node {$u$};
    \draw[very thick] (0,2) arc (180:0:1.25 and 0.74);
    \draw[very thick] (0,-2) arc (360:180:1.25 and 0.74);
    \draw (1.25,3.5) node {$h_1$}
    (-1.25,-3.5) node {$h_2$} ;
  \end{scope}
  \begin{scope}[shift={(0,-10)}]
    \draw (0,0) circle (2);
    \draw (0,0) circle (0.5);
    \belowcollar{-0.5}{0}{0.75}{0.25}
    \draw (-1.25,0.25) node {$\blacktriangleleft$}
    (-1.25,0.5) node {$u$};
    \draw[very thick] (0,2)  .. controls (-0.5,2.5) and (-0.5,3)
    .. (0,3) .. controls (0.5,3) and (0.5,2.5) ..(0,2);
    \draw[very thick] (0,-2)  .. controls (-0.5,-2.5) and (-0.5,-3)
    .. (0,-3) .. controls (0.5,-3) and (0.5,-2.5) ..(0,-2);
    \draw (0,3.5) node {$h_1$} (0,-3.5) node {$h_2$};
  \end{scope}
  \begin{scope}[shift={(7,-12)}]
    \draw[very thick] (0,0) .. controls (-0.5,0.5) and (-0.5,2) .. (0,2)
    .. controls (0.5,2) and (0.5,0.5) ..(0,0); \draw[very
    thick,rotate=120] (0,0) .. controls (-0.5,0.5) and (-0.5,2) .. (0,2)
    .. controls (0.5,2) and (0.5,0.5) ..(0,0); \draw[very
    thick,rotate=-120] (0,0) .. controls (-0.5,0.5) and (-0.5,2)
    .. (0,2) .. controls (0.5,2) and (0.5,0.5) ..(0,0);
    \draw (90:2.5) node{$h_1$} 
    (-30:2.5) node{$h_2$}
    (-150:2.5) node{$u$} ; 
    \fill[color=black] (0,0) circle (0.25);
  \end{scope}
  \draw[very thick,->] (5,-2) -- (6,-3);
  \draw[very thick,->] (6,-7) -- (3,-9);
  \draw[very thick,->] (3,-11) -- (5,-12);
\end{tikzpicture}
\caption{A continuous map from $\Udder$ to the wedge of three
  circles. The space on the top left is homeomorphic to $\Udder$. This
  can be seen by cutting along the curves labelled $u,v$.}\label{fig:2}
\end{figure}
The space on the top left obtained by taking three disjoint tori,
identifying them along the longitudinal curves as shown, and then
surgering on handles $H_1,H_2$ is homeomorphic to the space
$\Udder$. A continuous map from $\Udder$ to the wedge of three circles
is then constructed by filling in and collapsing the handles to arcs
$h_1,h_2$, identifying the tori, and then mapping the resulting torus
to a circle so that the image of the longitudinal curve $u$ (or $v$,
as they are now freely homotopic inside a torus) maps with degree 1
onto a circle in the wedge of three circles.

\begin{lem}\label{lem:map-is-nice}
  The homomorphism $\pi_1(\Udder) \rightarrow \freegrp_3$ given by the
  continuous map in Figure \ref{fig:2} is onto, the vertex groups
  $\pi_1(\Sigma_v),\pi_1(\Sigma_u)$ have non-abelian image and the
  edge groups $\bk{u},\bk{v}$ are mapped injectively.
\end{lem}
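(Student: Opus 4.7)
My plan is to verify the three claims in sequence; the first two follow directly from the description of the map in Figure~\ref{fig:2}, and the third requires a short extra argument. I would first fix a basis $t, x_1, x_2$ of $\freegrp_3$ so that $t$ corresponds to the circle obtained by collapsing the (identified) torus and $x_1, x_2$ correspond to the arcs $h_1, h_2$ into which the handles $H_1, H_2$ collapse. By construction, $u$ and $v$ each map with degree $1$ onto the $t$-circle, so each has image a conjugate of $t^{\pm 1}$; since $t$ is primitive in $\freegrp_3$ this establishes the injectivity of the induced homomorphism on $\bk u$ and $\bk v$. Surjectivity follows because loops in $\Udder$ that traverse the handle $H_i$ project to $x_i$ in the wedge, so the image contains the whole basis $t, x_1, x_2$.

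The main content is the non-abelianness of the images of $\pi_1(\Sigma_u)$ and $\pi_1(\Sigma_v)$; by symmetry I only need to treat $\pi_1(\Sigma_u) = \bk{a,b,c}$. Writing $\rho$ for the homomorphism, the proof of Lemma~\ref{lem:uv-Magnus-pair} shows that the four boundary-parallel classes $a, b, c, d = abc$ are each conjugate in $\pi_1(\Udder)$ to $u^{\pm 1}$ or $v^{\pm 1}$, so all four of $\rho(a), \rho(b), \rho(c), \rho(d)$ lie in the $\freegrp_3$-conjugacy class of $t$. Since the centralizer of the primitive element $t$ in $\freegrp_3$ is $\bk t$, two conjugates of $t$ commute if and only if they are equal up to sign, so the image of $\pi_1(\Sigma_u)$ is abelian if and only if $\rho(a), \rho(b), \rho(c)$ are all equal to a common $t^{\pm 1}$. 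I would rule this out by tracing, inside $\Udder$, a concrete conjugating path between (say) the boundary representative $a$ and the circle $v$; such a path is forced to exit the identified torus and pass through one of the handles $H_i$, and its projection to $\vee_3 S^1$ therefore contains the letter $x_i$. Comparing with the analogous (different) path for $b$ produces a conjugator between $\rho(a)$ and $\rho(b)$ lying outside $\bk t$, so $\rho(a) \neq \rho(b)$ and the image is non-abelian.

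The main obstacle is carrying out this path-chase precisely: one has to read off the attaching pattern in Figure~\ref{fig:2} to select two boundary components of $\Sigma_u$ whose conjugating paths in $\pi_1(\Udder)$ are necessarily routed through different (non-collapsed) handles, and verify that the images of those handles are indeed the nontrivial generators $x_1$, $x_2$ rather than being pinched to a point by the quotient map. Once this is done the three conclusions of the lemma are all in place, and they provide exactly the input needed to apply Proposition~\ref{prop:strict-limit} in the next step: the QH non-abelian-image condition from the surface vertices $\Sigma_u, \Sigma_v$, and the edge-group and abelian-envelope injectivity conditions from the cyclic vertices $\bk u, \bk v$.
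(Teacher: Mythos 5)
Your approach is the same as the paper's: surjectivity and the injectivity on $\bk u,\bk v$ are read off from the collapse map, and the non-abelian image of $\pi_1(\Sigma_u)$ comes from a loop in $\Sigma_u$ that traverses a handle. The paper phrases the last point more economically than you do: rather than arguing that $\rho(a),\rho(b),\rho(c)$ are conjugates of $t$ by conjugators that differ mod $\bk t$, it simply observes that the image of $\pi_1(\Sigma_u)$ contains a conjugate of $\bk{\rho(u),\rho(h_1)\rho(u)\rho(h_1)^{-1}} = \bk{t, x_1 t x_1^{-1}}$, which is visibly free of rank two. This is the same geometric fact (one boundary-parallel class has a trivial conjugating path, another's passes once through $H_1$) but it sidesteps the centralizer argument you set up. Two small overcomplications in your sketch: you do not need two boundary components routed through two \emph{different} handles — a trivial conjugator for one and a conjugator through $H_1$ for the other already suffices, which is exactly what the paper exhibits — and the worry that a handle might be ``pinched to a point'' is already answered by the surjectivity you proved earlier, since surjectivity onto $\freegrp_3$ forces the arcs $h_1,h_2$ to map to the basis elements $x_1,x_2$. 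Finally, note that you flag the path-chase as an unresolved obstacle; the paper does not perform it in any more detail than you do, it simply records the conclusion of that chase, so your proof as written has an explicit gap exactly where the paper has an implicit one. If you want a genuinely tighter argument than the paper's, pin down from Figure~\ref{fig:2} the two boundary curves of $\Sigma_u$ abutting the two ends of $H_1$ and write their images as $t$ and $x_1 t x_1^{-1}$ (up to a common conjugate); that turns the heuristic into a one-line verification.
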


\begin{proof}
  The surjectivity of the map $\pi_1(\Udder) \rightarrow \freegrp_3$
  as well as the injectivity of the restrictions to $\bk{u},\bk{v}$
  are obvious. Note moreover that the image of $\pi_1(\Sigma_u)$
  contains (some conjugate of) $\bk{u, h_1 u h_1^{-1}}$ and is
  therefore non-abelian, the same is obviously true for the image of
  $\pi_1(\Sigma_v)$.
\end{proof}

The final ingredient is a classical result of Magnus.

\begin{thm}[\cite{Magnus-1931}]\label{thm:Magnus}
  The free group $\free$ has no Magnus pairs.
\end{thm}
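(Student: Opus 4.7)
The plan is to adopt Magnus's classical approach: induction on cyclically reduced word length, combined with his Freiheitssatz. To start, since both the hypothesis $\ncl{g}=\ncl{h}$ and the conclusion $[g]_\pm = [h]_\pm$ are invariant under replacing $g$ and $h$ by cyclically reduced conjugates (and under inversion), I may assume $g,h \in \free$ are cyclically reduced. Passing to the abelianization kills the conjugation action, so the hypothesis forces $\langle \bar g \rangle = \langle \bar h \rangle$ in $\free^{\mathrm{ab}}$, whence $\bar g = \pm \bar h$. Replacing $h$ by $h^{-1}$ if needed, I assume $\bar g = \bar h$; in particular the same generators occur in $g$ and $h$ with the same exponent sums.

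I would then induct on the cyclically reduced length $|g|$. The base case $|g|=1$ reduces to $g$ being a generator $x$: then $\free/\ncl{g}$ is free on the remaining generators, and combining $h\in\ncl{x}$ with the constraint $\bar h = \bar x$ forces $h$ to be a single conjugate of $x$, via a direct argument on the free product splitting $\free = \langle x\rangle * \free'$. For the inductive step I would invoke Magnus's $t$-substitution: after possibly applying an automorphism of $\free$, choose a basis in which some generator $y$ has exponent sum zero in $g$; then $\free$ is an HNN extension with stable letter $y$ over the free group generated by $y$-translates of the remaining generators, and $g$, being $y$-balanced, conjugates into the base as a strictly shorter cyclically reduced word $g^{\flat}$. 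Likewise $h$ rewrites as $h^{\flat}$.

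The main obstacle is transferring the normal-closure equality through this rewriting: one must show that $\ncl{g}=\ncl{h}$ in $\free$ implies $\ncl{g^{\flat}}=\ncl{h^{\flat}}$ in the smaller auxiliary free group. This is the heart of Magnus's analysis of one-relator groups, where the Freiheitssatz identifies the base of the HNN splitting of $\free/\ncl{g}$ as a one-relator group with defining relator $g^{\flat}$, and similarly for $\free/\ncl{h}$, so the normal closure equality does descend. Granting this, the induction hypothesis gives $g^{\flat}\sim_\pm h^{\flat}$, and unwinding the Magnus substitution yields $g\sim_\pm h$ in $\free$, completing the proof.
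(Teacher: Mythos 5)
The paper states this as a citation to Magnus's 1931 paper and does not reproduce a proof, so there is no paper-internal argument to compare against; your outline does follow the shape of Magnus's classical argument (cyclic reduction, abelianization giving $\bar g=\pm\bar h$, a $t$-substitution into an HNN/semidirect decomposition, and an appeal to the Freiheitssatz), and that is the right skeleton. However, as written there are genuine gaps.

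The base case is false as stated: the element $h=x\cdot[x,zxz^{-1}]=x^{2}zxz^{-1}x^{-1}zx^{-1}z^{-1}$ lies in $\ncl{x}$ and has $\bar h=\bar x$, yet it is cyclically reduced of length $9$ and hence not conjugate to $x$. So ``$h\in\ncl{x}$ and $\bar h=\bar x$'' does \emph{not} force $h\sim_{\pm}x$; you must use the full hypothesis $\ncl{h}=\ncl{x}$ (which forces $\free/\ncl{h}$ free of rank $n-1$, hence $h$ primitive), and even then identifying $h$ as a conjugate of $x^{\pm1}$ is essentially an instance of the theorem, not a quick free-product observation. More seriously, the step you correctly flag as the crux --- descending $\ncl{g}=\ncl{h}$ in $\free$ to $\ncl{g^{\flat}}=\ncl{h^{\flat}}$ in the base --- is only gestured at. After rewriting, $g^{\flat}$ and $h^{\flat}$ live in finitely generated pieces $F(x_{i,j}:j\in I_g)$ and $F(x_{i,j}:j\in I_h)$ of the (infinitely generated) kernel of $\free\onto\bk{y}$, and the index ranges $I_g$, $I_h$ need not coincide; one has to normalize by $y$-conjugation, enlarge to a common range $I$, and invoke the Freiheitssatz twice to identify the subgroup of $\free/\ncl{g}=\free/\ncl{h}$ generated by $\{x_{i,j}:j\in I\}$ as both $F(x_{i,j}:j\in I)/\ncl{g^{\flat}}$ and $F(x_{i,j}:j\in I)/\ncl{h^{\flat}}$ before the induction can proceed. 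One also needs to check that the induction parameter strictly decreases, which requires the chosen zero-exponent-sum generator $y$ to actually occur in at least one of $g,h$; the case where $g$ is a power of a single generator needs a separate word. None of this is fatal to the strategy --- it is Magnus's --- but in its present form the proposal omits precisely the work that makes the theorem a theorem.
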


\begin{prop}\label{prop:counter-eg}
  $\pi_1(\Udder)$ is a limit group. For every homomorphism
  $\rho\colon\pi_1(\Udder) \rightarrow \freegrp$ the images $\rho(u)$,
  $\rho(v)$ of the elements $u$, $v$ given in Lemma
  \ref{lem:uv-Magnus-pair} are conjugate in $\freegrp$ even though the
  pair $u,v$ are not conjugate in $\pi_1(\Udder)$.
\end{prop}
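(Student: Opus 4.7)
The statement bundles two assertions: first, that $\pi_1(\Udder)$ is a limit group, and second, that every homomorphism to a free group conjugates $u$ and $v$ (the non-conjugacy of $u,v$ in $\pi_1(\Udder)$ was already handled in Lemma \ref{lem:uv-Magnus-pair} via the Bass--Serre tree). My plan is to prove these two claims independently, with the first using Proposition \ref{prop:strict-limit} applied to the map from Figure \ref{fig:2}, and the second a direct combination of Lemma \ref{lem:uv-Magnus-pair} and Magnus's Theorem \ref{thm:Magnus}.

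For the limit group claim, the plan is to verify that the homomorphism $\sigma\colon \pi_1(\Udder) \onto \freegrp_3$ from Figure \ref{fig:2} is strict with respect to the cyclic splitting $D$ of $\pi_1(\Udder)$ used in the proof of Lemma \ref{lem:uv-Magnus-pair}, and then invoke Proposition \ref{prop:strict-limit} (since $\freegrp_3$ is trivially a limit group). The splitting $D$ has two QH vertex groups $\pi_1(\Sigma_u), \pi_1(\Sigma_v)$ and two abelian (cyclic) vertex groups $\bk{u}, \bk{v}$; all edge groups are generated by $u$ or $v$, and the attaching maps have degree $1$, so the peripheral subgroup $A_D$ at each abelian vertex equals the whole vertex group. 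First I would quickly check 2-acylindricity: the underlying graph is bipartite between surface and cyclic vertices, and any length-3 path in the Bass--Serre tree forces an intersection of two distinct primitive boundary subgroups of $\pi_1(\Sigma_u)$ or $\pi_1(\Sigma_v)$, which is trivial. The remaining conditions of Definition \ref{defn:strict} are then read off from Lemma \ref{lem:map-is-nice}: injectivity on $\bk{u}, \bk{v}$ gives injectivity on all edge groups and on each $A_D$, non-abelianness of $\sigma(\pi_1(\Sigma_u)), \sigma(\pi_1(\Sigma_v))$ handles the QH condition, and the envelope condition is vacuous since there are no non-QH, non-abelian vertex groups.

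For the conjugacy claim, I would apply an arbitrary $\rho\colon \pi_1(\Udder) \to \freegrp$ to the relations $u \in \ncl{v}$ and $v \in \ncl{u}$ established in Lemma \ref{lem:uv-Magnus-pair}. This gives $\rho(u) \in \ncl{\rho(v)}$ and $\rho(v) \in \ncl{\rho(u)}$ inside the subgroup $\rho(\pi_1(\Udder)) \leq \freegrp$, hence $\ncl{\rho(u)} = \ncl{\rho(v)}$ in $\freegrp$ as well. By Theorem \ref{thm:Magnus}, $\freegrp$ has no Magnus pairs, so $[\rho(u)]_\pm = [\rho(v)]_\pm$, i.e.\ $\rho(u)$ is conjugate to $\rho(v)^{\pm 1}$ in $\freegrp$. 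The only real obstacle I anticipate is the bookkeeping for 2-acylindricity and the verification of the abelian-vertex condition; both reduce to the fact that the attaching maps in Figure \ref{fig:1} have degree $1$ and that distinct boundary components of a four-holed sphere generate pairwise non-commensurable cyclic subgroups of its free fundamental group.
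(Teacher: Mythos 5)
Your proposal is correct and follows exactly the route of the paper's own proof: Lemma \ref{lem:map-is-nice} together with Proposition \ref{prop:strict-limit} gives the limit-group claim, and Lemma \ref{lem:uv-Magnus-pair} together with Magnus's Theorem \ref{thm:Magnus} gives the conjugacy claim. The paper leaves the verification of strictness (2-acylindricity, the $A_D$ condition at the cyclic vertices, etc.) implicit and simply cites the two lemmas, whereas you spell out those checks; this is a useful elaboration but not a different argument.
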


\begin{proof}
  Lemma \ref{lem:map-is-nice} and Proposition \ref{prop:strict-limit}
  imply that $\pi_1(\Udder)$ is a Limit group. Lemma
  \ref{lem:uv-Magnus-pair} and Theorem \ref{thm:Magnus} imply that,
  for every homomorphism $\pi_1(\Udder) \to \freegrp$ to a free group
  $\freegrp$, the image of $u$ must be conjugate to the image of
  $v^{\pm 1}$ even though $u \not\sim_\pm
  v$. 
\end{proof}

\section{A different failure of \frcsy}

\label{sec:other-case}
 
We now construct another limit group $\idouble$ that is not \frcs, but for a completely different reason.

\begin{defn}[$C$-test words {\cite{ivanov1998certain}}]\label{def:c-test}
  A non-trivial word $w(x_1,\ldots,x_n)$ is a \define{$C$-test word} in $n$
  letters for $\freegrp_m$ if for any two $n$-tuples
  $(A_1,\ldots,A_n), (B_1,\ldots,B_n)$ of elements of $\freegrp_m$
  the equality $w(A_1,\ldots,A_n) = w(B_1,\ldots,B_n) \neq 1$
  implies the existence of an element $S \in \freegrp_m$ such that
  $B_i = SA_i S^{-1}$ for all $i=1,2,\ldots,n.$
\end{defn}

\begin{thm}[{\cite[Main Theorem]{ivanov1998certain}}]\label{thm:c-test}
  For arbitrary $n \geq 2$ there exists a non-trivial indivisible word
  $w_n(x_1,\ldots,x_n)$ which is a $C$-test word in $n$ letters for
  any free group $\freegrp_m$ of rank $m \geq 2$.
\end{thm}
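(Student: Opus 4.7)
The plan is to construct $w_n$ as a product of commutators raised to very high powers, arranged so that its reduced form under any substitution rigidly records the input tuple up to simultaneous conjugation. A natural first candidate is
$$w_n(x_1,\ldots,x_n) \;=\; \prod_{1 \le i < j \le n} \bigl[\, x_i^{\,p},\, x_j^{\,q}\,\bigr]^{\,N}$$
for carefully chosen distinct primes $p,q$ and a very large exponent $N$; if this is not rigid enough on degenerate substitutions (e.g.\ tuples with many $A_i$'s in a common cyclic subgroup) one modifies by appending further factors or, alternatively, argues by induction on $n$, taking $w_{n-1}$ to be an already-constructed $C$-test word and attaching $x_n$ through a new high-power commutator block.

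First I would establish a small-cancellation / no-overlap lemma: for any substitution $(A_1,\ldots,A_n)$ in any $\freegrp_m$ for which $w_n(A_1,\ldots,A_n) \ne 1$, the cyclically reduced form of $w_n(A_1,\ldots,A_n)$ admits a canonical decomposition into blocks, one for each commutator factor, with the exponent $N$ surviving essentially intact through free reduction. The main tool is the classical fact that two periodic words $u^k$ and $v^\ell$ in a free group can share a long subword only if $u$ and $v$ have a common power; choosing $N$ much larger than the possible overlaps between adjacent blocks $[A_i^p,A_j^q]^N$ forces the block structure to be detectable inside the reduced word. This is the main obstacle: the estimate must be uniform in $m$ and in the $A_i$'s, and the degenerate cases (when the pair $A_i^p,A_j^q$ commutes, or when several $A_i$'s share a common cyclic root) must be ruled out or handled separately.

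Next, assuming $w_n(A_1,\ldots,A_n) = w_n(B_1,\ldots,B_n) \ne 1$, match the $k$-th block of the canonical decomposition on the $A$-side against the $k$-th on the $B$-side. Block-by-block equality, combined with unique roots in free groups, yields $[A_i^p,A_j^q] = S_k\,[B_i^p,B_j^q]\,S_k^{-1}$ for each occurrence. The crucial geometric point is that consecutive blocks share a common endpoint inside the reduced word, which pins all the conjugators $S_k$ to a single element $S \in \freegrp_m$.

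Finally, from the simultaneous system $[A_i^p,A_j^q] = S\,[B_i^p,B_j^q]\,S^{-1}$ ranging over all pairs $(i,j)$, one extracts the desired conjugacy $B_i = S^{-1}A_iS$: unique roots promote conjugacy of $A_i^p$ and $S B_i^p S^{-1}$ to conjugacy of $A_i$ and $SB_iS^{-1}$, provided $A_i \ne 1$, which is forced by the non-triviality hypothesis on $w_n(A_1,\ldots,A_n)$ together with the structure of the word. Indivisibility of $w_n$ as a word in $\freegrp(x_1,\ldots,x_n)$ is a separate, purely combinatorial check at the level of the abstract word and can be arranged by a small adjustment of the exponents. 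The genuine technical difficulty is entirely concentrated in Step 2; the rest is bookkeeping in free groups.
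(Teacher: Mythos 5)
The result you are trying to prove is not proved in this paper at all: it is quoted verbatim as the Main Theorem of Ivanov's 1998 paper (the citation \texttt{ivanov1998certain}) and used as a black box. So there is no in-paper argument to compare against, and any evaluation has to be against Ivanov's original proof.

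On the merits of your sketch: the overall skeleton (high-power commutator products, a periodicity/no-overlap lemma to recover a block decomposition of the reduced word, block matching, then unique roots to promote conjugacy of $A_i^p$ to conjugacy of $A_i$) is the right shape and does resemble Ivanov's argument. But your leading candidate $w_n=\prod_{i<j}[x_i^p,x_j^q]^N$ has a structural defect that is not a ``fine-tuning'' issue: with only two fixed exponents $p,q$ and a single power $N$, all the blocks $[A_i^p,A_j^q]^N$ have the same shape, so the block-matching step cannot tell the $k$-th factor on the $A$-side from the $k'$-th on the $B$-side, nor can it rule out a cyclic permutation or a reversal of the factors. Ivanov's word for $n=2$ is a product of several commutator factors with pairwise distinct, rapidly growing exponents, chosen precisely so that each block carries a unique exponent signature; for $n\ge 3$ he proceeds by the inductive substitution you mention only in passing, plugging previously built $C$-test words into $w_2$. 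Moreover, the degenerate substitutions you wave at (several $A_i$ sharing a common cyclic root, a commutator $[A_i^p,A_j^q]$ trivial, some $A_i=1$ while $w_n(A_1,\dots,A_n)\ne 1$) are not a footnote: the definition of a $C$-test word quantifies over \emph{all} tuples in \emph{all} $\freegrp_m$, so these cases cannot be ``ruled out,'' and treating them occupies a large fraction of Ivanov's proof. Finally, indivisibility of $w_n$ is not a free adjustment of exponents; it falls out of the same cancellation analysis, not a separate combinatorial step. In short, your plan identifies the correct tools but the specific word is too symmetric to support the block-matching step, and the hard cases are exactly the ones you defer.
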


\begin{defn}[Doubles and retractions]
  \label{def:double}
  Let $\freegrp(x,y)$ denote the free group on two generators, let
  $w=w(x,y)$ denote some word in $\{x,y\}^{\pm 1}$. The amalgamated
  free product
  \[ D(x,y;w) = \bk{\freegrp(x,y),\freegrp(r,s)\mid w(x,y)= w(r,s)}\]
  is the \define{double of $\freegrp({x,y})$ along $w$}.  The
  homomorphism $\rho\colon D(x,y;w) \onto \freegrp(x,y)$ given by $r \mapsto
  x, s\mapsto y$ is the \define{standard retraction.}
\end{defn}

\begin{defn}\label{defn:mirror}
  Let $u \in \freegrp(x,y)\leq D(x,y;w)$, but with
  $u \not\sim_\pm w^n$ for any $n$, be given by a specific word
  $u(x,y)$. Its \define{mirror image} is the distinct element
  $u(r,s) \in \freegrp(r,s) \leq D(x,y;w)$. $u(x,y)$ and $u(r,s)$ form
  a \define{mirror pair.}
\end{defn}

It is obvious that mirror pairs are not $\sim_{\pm}$-equivalent. Let
$w$ be a $C$-test word and let $\idouble= D(x,y;w)$. It is well known
that any such double is a limit group. We will call $\idouble$ a
\define{$C$-double}.

\begin{lem}\label{lem:corank2}
  The $C$-double $\idouble$ cannot map onto a free group of rank more than
  $2$.
\end{lem}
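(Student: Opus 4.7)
My plan is to fix an arbitrary surjection $\phi\colon\idouble\onto\freegrp_n$ and split according to whether or not $\phi$ kills the relator $w$. In the first case the $C$-test word property severely constrains $\phi$; in the second case the problem reduces to understanding free quotients of a free product of two one-relator groups.

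\emph{Suppose first that $\phi(w(x,y))\neq 1$.} The amalgamation relation gives $\phi(w(x,y))=\phi(w(r,s))\neq 1$, so Theorem~\ref{thm:c-test} produces an element $S\in\freegrp_n$ with $\phi(r)=S\phi(x)S^{-1}$ and $\phi(s)=S\phi(y)S^{-1}$. Since inner automorphisms act trivially on an abelianization, the images of $\phi(r)$ and $\phi(s)$ in $H_1(\freegrp_n)=\Z^n$ agree with those of $\phi(x)$ and $\phi(y)$, so the abelianized image is generated by at most two elements and $n\leq 2$.

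\emph{Suppose next that $\phi(w(x,y))=1$.} Then $\phi(w(r,s))=1$ as well, and $\phi$ factors through $A\ast B$, where $A=\freegrp(x,y)/\ncl{w(x,y)}$ and $B=\freegrp(r,s)/\ncl{w(r,s)}$. The key subclaim is that neither $A$ nor $B$ admits a surjection onto $\freegrp_2$: any putative map $\pi\colon A\onto\freegrp_2$, precomposed with the quotient $\freegrp(x,y)\onto A$, would be a surjective endomorphism of $\freegrp_2$, hence an isomorphism by Hopficity, contradicting the nontriviality of $w$ in $\freegrp(x,y)$. Consequently the images of $A$ and $B$ in $\freegrp_n$ are free subgroups of rank at most $1$; since together they generate $\freegrp_n$, the natural surjection from their free product onto $\freegrp_n$ gives $n\leq 2$.

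There is no real obstacle in the argument. The essential content is Theorem~\ref{thm:c-test}, which forces the apparently $4$-generator group $\idouble$ to behave like a $2$-generator group under any homomorphism to a free group that does not kill $w$; the case in which $w$ is killed is handled by elementary considerations with one-relator quotients of $\freegrp_2$.
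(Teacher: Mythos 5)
Your proof is correct, and it takes a genuinely different and more elementary route than the paper's. The paper argues in one stroke: $w$ is not primitive, so by Shenitzer $\idouble$ is not free; $w$ is not a proper power by Theorem~\ref{thm:c-test}; and then a theorem of Louder on Scott complexity of doubles (\cite[Theorem 1.5]{louder2013scott}) rules out a surjection onto $\freegrp_3$. That argument is short but outsources all the work to two external theorems, and in fact applies to any double $D(x,y;w)$ with $w$ non-primitive and not a proper power, making no use of the $C$-test word property per se. Your proof, by contrast, is self-contained: the case $\phi(w)\neq 1$ exploits the $C$-test word property directly via the clean observation that conjugation dies in $H_1(\freegrp_n)$, so the abelianized image is $2$-generated; the case $\phi(w)=1$ reduces to the free product $A\ast B$ of two $2$-generator one-relator groups and uses Hopficity of $\freegrp_2$ to cap the rank of each factor's image at $1$. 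One tiny terminological slip: the composite $\freegrp(x,y)\onto A\onto\freegrp_2$ is not literally a surjective \emph{endomorphism} of $\freegrp_2$ (domain and codomain are different rank-$2$ free groups), but after fixing an isomorphism it becomes one, and Hopficity applies exactly as you intend. The trade-off is that your argument is specific to $C$-doubles, whereas the paper's cited result covers a wider class of doubles; in exchange you get a proof readable without consulting \cite{shenitzer1955decomposition} or \cite{louder2013scott}.
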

\begin{proof}
  $w$ is not primitive in $\freegrp(x,y)$ therefore by
  \cite{shenitzer1955decomposition} $\idouble = D(x,y;w)$ is not free. Theorem
  \ref{thm:c-test} specifically states that $w$ is not a proper
  power. It now follows from \cite[Theorem 1.5]{louder2013scott} that
  $D(w)$ cannot map onto $\freegrp_3$.
\end{proof}

The proof of the next theorem amounts to analyzing a Makanin-Razborov
diagram. We refer the reader to \cite{heil2016jsj} for an explicit
description of this diagram.

\begin{thm}
  For any map $\phi\colon \idouble \to \freegrp$ from a $C$-double
  to some free group, if $u(x,y) \in \freegrp(x,y)$ lies in the
  commutator subgroup $[\freegrp(x,y),\freegrp(x,y)]$, but is not
  conjugate to $w^n$ for any $n$, then the images
  $\phi\left(u(x,y)\right)$ and $\phi\left(u(r,s)\right)$ of mirror
  pairs are conjugate. In particular the limit group $\idouble$ is not
  \frcs.  Furthermore mirror pairs $u(x,y),u(r,s)$ do not form Magnus
  pairs.
\end{thm}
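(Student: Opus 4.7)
The proof strategy for the first assertion is to case-split on whether $\phi$ kills the amalgamating element $w(x,y)=w(r,s)$ in $\idouble$. Suppose first that $\phi(w(x,y))\neq 1$. Since $w(x,y)=w(r,s)$ in $\idouble$, we have $w(\phi(x),\phi(y)) = w(\phi(r),\phi(s)) \neq 1$ in $\freegrp$, and the defining property of the $C$-test word $w$ (Definition~\ref{def:c-test}) produces some $S\in\freegrp$ with $\phi(r)=S\phi(x)S^{-1}$ and $\phi(s)=S\phi(y)S^{-1}$. Substituting into $u(r,s)$ yields $\phi(u(r,s)) = S\phi(u(x,y))S^{-1}$, the required conjugacy; note that this case does not even use the hypothesis $u\in[\freegrp(x,y),\freegrp(x,y)]$.

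Suppose instead that $\phi(w(x,y))=1$, and hence also $\phi(w(r,s))=1$. I claim $\phi(x)$ and $\phi(y)$ must commute. The subgroup $\bk{\phi(x),\phi(y)}\leq\freegrp$ is free of rank at most $2$; if its rank were $2$, then by the Hopfian property the generating pair $\{\phi(x),\phi(y)\}$ would itself be a free basis, forcing $w(\phi(x),\phi(y))\neq 1$, a contradiction. Hence $\phi|_{\freegrp(x,y)}$ has cyclic image, and since $u\in[\freegrp(x,y),\freegrp(x,y)]$ we conclude $\phi(u(x,y))=1$. The symmetric argument applied to $w(r,s)$ gives $\phi(u(r,s))=1$, and both images are trivially conjugate. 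The failure of $\fcsy$ for $\idouble$ then follows at once, since mirror pairs are by Definition~\ref{defn:mirror} never $\sim_\pm$-equivalent.

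For the assertion that mirror pairs do not form Magnus pairs I plan to argue by contradiction. Assume $\ncl{u(x,y)}=\ncl{u(r,s)}$ in $\idouble$ and consider the quotient $\bar\idouble := \idouble/\ncl{u(x,y)}$, in which both elements die. The plan is to realize $\bar\idouble$ as the amalgamated product $\bigl(\freegrp(x,y)/\ncl{u(x,y)}^{\freegrp(x,y)}\bigr) *_{\bar w} \freegrp(r,s)$, where $\bar w$ is the image of $w(x,y)$ in the one-relator quotient on the left. Provided the amalgamation is nondegenerate, Bass--Serre theory forces $\freegrp(r,s)$ to embed into $\bar\idouble$, whence $u(r,s)\neq 1$ in $\bar\idouble$, directly contradicting the Magnus pair assumption. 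The main obstacle, which I expect to be the crux of the proof, is verifying that $\bar w$ has infinite order in the one-relator group $\freegrp(x,y)/\ncl{u(x,y)}^{\freegrp(x,y)}$: one would apply B.B.~Newman's spelling theorem to classify the torsion elements (as conjugates of powers of the root of $u$) and then rule out $\bar w$ being such a torsion element (or trivial) by combining the hypothesis $u\not\sim_\pm w^n$ with the indivisibility of $w$, the delicate subcase being when $w$ itself lies in $[\freegrp(x,y),\freegrp(x,y)]$.
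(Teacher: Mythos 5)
Your argument for the first assertion is correct and essentially matches the paper. In Case 2 ($\phi(w)\neq 1$) the paper goes further and pins down $S=\phi(w)^n$ using Baumslag's theorem, but as you observe this extra precision is not needed for the conjugacy conclusion. In Case 1 ($\phi(w)=1$) your Hopficity argument is a clean justification of the paper's assertion that $\phi$ restricted to $\freegrp(x,y)$ has abelian (indeed cyclic) image.

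The Magnus-pair part, however, has a genuine gap, and you have correctly located it but misjudged its nature. You write $\bar\idouble=\bigl(\freegrp(x,y)/\ncl{u(x,y)}^{\freegrp(x,y)}\bigr)*_{\bar w}\freegrp(r,s)$ and plan to argue that $\freegrp(r,s)$ embeds. This decomposition is only correct when $\bar w$ has infinite order in $\freegrp(x,y)/\ncl{u(x,y)}^{\freegrp(x,y)}$, and that is not something you can hope to prove in general: it can fail. If $\bar w$ has order $n<\infty$ (including $n=1$, i.e. $w\in\ncl{u(x,y)}^{\freegrp(x,y)}$, which is not excluded by the hypotheses), then $w(r,s)^n$ dies in $\bar\idouble$, so $\freegrp(r,s)$ does not embed and your contradiction evaporates. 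Newman's spelling theorem will not rescue you here; when $u$ is not a proper power the quotient is torsion-free but $\bar w$ may still be trivial, and when $u$ is a proper power the torsion analysis doesn't rule out $\bar w$ being a torsion element either.

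The paper sidesteps this entirely. It writes the correct canonical decomposition
\[
\bar\idouble=\bigl(\freegrp(x,y)/\ncl{u(x,y)}\bigr)*_{\bk{\overline w}}\bigl(\freegrp(r,s)/\ncl{w^n}\bigr),
\]
where $\bk{w^n}=\bk{w}\cap\ncl{u(x,y)}_{\idouble}$, valid whether $n$ is zero or positive. The kernel of $\freegrp(r,s)\to\bar\idouble$ is thus $\ncl{w^n}^{\freegrp(r,s)}$. Now the hypothesis $\ncl{u(x,y)}_\idouble=\ncl{u(r,s)}_\idouble$ lets one compute the same kernel from the symmetric decomposition of $\bar\idouble=\idouble/\ncl{u(r,s)}$, yielding $\ncl{u(r,s)}^{\freegrp(r,s)}$. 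Equating the two kernels gives $\ncl{u(r,s)}^{\freegrp(r,s)}=\ncl{w^n}^{\freegrp(r,s)}$, and then Magnus's theorem in the free group $\freegrp(r,s)$ forces $u(r,s)\sim_\pm w^n$, contradicting the hypothesis $u\not\sim_\pm w^n$. So the key idea you are missing is to compare the kernel of $\freegrp(r,s)\to\bar\idouble$ computed two ways, rather than trying to show that kernel is trivial.
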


\begin{proof}
  To answer this question we must analyze all maps for $\idouble$ to a free
  group. By Lemma \ref{lem:corank2}, any such map factors through a
  surjection onto $\freegrp_2$, or factors through $\Z$.

  ~ \\ \textbf{Case 1:} \emph{$\phi(w)= 1$.} In this case the factor
  $\freegrp(x,y)$ does not map injectively, it follows that its image
  is abelian. It follows that $\phi$ factors through the free product
  \[
  \pi_{ab}\colon D(x,y;w) \to \freegrp(x,y)^{\mathrm{ab}}*\freegrp(r,s)^{\mathrm{ab}}.
  \]
  In this case all elements of the commutator subgroups of
  $\freegrp(x,y)$ and $\freegrp(r,s)$ are mapped to the identity and
  therefore have conjugate images.
  
  ~\\ \textbf{Case 2:} \emph{$\phi(w)\neq 1.$} In this case the
  factors $\freegrp(x,y),\freegrp(r,s)\leq D(x,y;w)$ map
  injectively. By Theorem \ref{thm:c-test}, since $w$ is a C-test word
  and $\phi(w(x,y)) = \phi(w(r,s)$, there is some $S \in \freegrp_2$
  such that $S\phi(x)S^{-1} =\phi(r)$ and $S\phi(y)S^{-1}
  =\phi(s)$. Suppose now that $w(x,y)$ mapped to a proper power, then
  by \cite[Main Theorem]{Baumslag-1965} $w(x,y) \in \freegrp(x,y)$ is
  part of a basis, which is impossible. It follows that the
  centralizer of $\phi\left(w\right)$ is $\bk{\phi(w)}$ so that
  $S = \phi(w)^n$. Therefore $\phi(r) = w^n\phi(x)w^{-n}$ and
  $\phi(s) = w^n\phi(y)w^{-n}$ and the result follows in this case as
  well.

  We now show that a mirror pair $u(x,y)$ and $u(r,s)$ is not a Magnus
  pair. Consider the quotient $D(x,y;w)/\ncl{u(x,y)}$.  By using a
  presentation with generators and relations, the group canonically
  splits as the amalgamated free product
  \[ \left(\freegrp(x,y)/\ncl{u(x,y)}\right)*_{\bk{\overline w}}
    \left(\freegrp(r,s) / \ncl{w^n}\right)
  \] where $\bk{w^n} = \bk{w} \cap \ncl{u}$ and $\overline{w}$ is the
  image of $w$ in $\bk{w}/\bk{w^n}$.  Now if
  $\ncl{u(x,y)} = \ncl{u(r,s)}$ then we must have
  $D(x,y;w)/\ncl{u(r,s)} = D(x,y;w)/\ncl{u(x,y)}$. This implies
  $\freegrp(r,s)/\ncl{(u(r,s))} = \freegrp(r,s)/\ncl{w^n}$, which
  implies by Theorem \ref{thm:Magnus} that $u(r,s) \sim_\pm w^n$,
  which is a contradiction.
\end{proof}

It seems likely that failure of \frcsy\ should
typically follow from C-test word like behaviour, rather than from
existence of Magnus pairs. 

\section{Towers are \frcs.}\label{sec:towers-fcs}

\begin{defn}\label{defn:quadratic-extension}
  Let $G$ be a group. A \define{regular quadratic extension} of $G$ is
  an extension $G\leq H$ such that \begin{itemize}
  \item $H$ splits as a fundamental group of a graph of groups with
    two vertex groups: $H_{v_1} = G$ and $H_{v_2} = \pi_1(\Sigma)$
    where $H_{v_2}$ is a QH vertex group (See Definition
    \ref{defn:QH}.)
  \item There is a retraction $H \onto G$ such that the image of
    $\pi_1(\Sigma)$ in $G$ is non abelian.
\end{itemize}
We say that $\Sigma$ is the \define{surface associated to the
  quadratic extension}. And note that if $\bdy \Sigma = \emptyset$
then $H = G*\pi_1(\Sigma)$.
\end{defn}

\begin{defn}\label{defn:abelian-extension}
  Let $G$ be a group. An \define{abelian extension by the free
    abelian group $A$} is an extension $G \leq G*_\bk{u} (\bk{u}\oplus
  A) =H$ where $u \in G$ is such that either its centralizer $Z_G(u) =
  \bk{u}$, or $u=1$. In the case where $u=1$ the extension is $G \leq
  G*A$ and it is called a \define{singular abelian extension}.
\end{defn}

\begin{defn}\label{defn:tower}
  Let $\freegrp$ be a (possibly trivial) free group. A \define{tower
    of height $n$ over $\freegrp$} is a group $G$ obtained from a
  sequence of extensions \[ \freegrp = G_0 \leq G_1 \leq \ldots \leq
  G_n = G \] where $G_i \leq G_{i+1}$ is either a regular quadratic
  extension or an abelian extension. The $G_i's$ are the
  \define{levels} of the tower $G$ and the sequence of levels is a
  \define{tower decomposition}. A tower consisting entirely of abelian
  extensions is an \define{iterated centralizer extension.}
\end{defn}

\begin{defn}\label{defn:level-decomposition}
  Let $\freegrp = G_0 \leq \ldots \leq G_n = G$ be a tower
  decomposition of $G$. We call the graphs of groups decomposition of
  $G_i$ with one vertex group $G_{i-1}$ and the other vertex group a
  surface group or a free abelian group as given in Definitions
  \ref{defn:quadratic-extension} and \ref{defn:abelian-extension} the
  \define{$i^{\textrm{th}}$ level decomposition.}
\end{defn}

Towers appear as NTQ groups in the work of Kharlampovich and
Miasnikov, and as $\omega$-residually free towers, as well as
completions of strict resolutions in the the work of Sela. It is a
well known fact that towers are limit groups \cite{KM-IrredI}. This
also follows easily from Proposition \ref{prop:strict-limit} and the
definitions.

\begin{prop}\label{prop:towers-discriminate}
  Let $G$ be a tower of height $n$ over $\freegrp$. Then $G$ is
  discriminated by retractions $G\rightarrow G_{n-1}$. $G$ is also
  discriminated by retractions onto $\freegrp$.
\end{prop}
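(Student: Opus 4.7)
The strategy is induction on the height $n$ of the tower; the base case $n=0$ is vacuous. For the inductive step it suffices to construct, at each level, a discriminating sequence of retractions $r_N\colon G_i \onto G_{i-1}$. Once we have such sequences at every level, we obtain a discriminating sequence of retractions $G \onto \freegrp$ by a standard diagonal argument: enumerate the countably many finite subsets $P_1, P_2, \ldots$ of $G \setminus \{1\}$ and, using the inductively supplied discriminating sequences at the intermediate levels, pick indices large enough at each level to make the composition nontrivial on $P_k$. Since a composition of retractions is a retraction, this settles the second assertion from the first.

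I would handle abelian extensions first. Suppose $G_i = G_{i-1} *_{\bk{u}} (\bk{u}\oplus A)$ with $A = \Z^k$ having basis $a_1, \ldots, a_k$ (in the singular case $u=1$, replace $u$ by any element of $G_{i-1}$ with cyclic self-centralizer, available by induction together with the non-abelianness of $\freegrp$). For each $\vec m \in \Z^k$ define $\phi_{\vec m}\colon G_i \to G_{i-1}$ to be the identity on $G_{i-1}$ and $a_j \mapsto u^{m_j}$; this is a well-defined retraction since the edge group $\bk{u}$ is fixed. A non-trivial $g \in G_i$ has an amalgam normal form $g = w_0 \alpha_1 w_1 \cdots \alpha_\ell w_\ell$ with $w_j \in G_{i-1}$ and the $\alpha_j \in (\bk{u}\oplus A)\setminus \bk{u}$; the image $\phi_{\vec m}(g)$ is obtained by substituting $a_j \mapsto u^{m_j}$. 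Choosing $\vec m$ along a fast-growing diagonal such as $m_j = N^{j}$ forces the $u$-powers obtained from the $\alpha_j$ to have very distinct magnitudes, so the resulting word in $G_{i-1}$ does not collapse for $N \gg 0$, yielding the desired discriminating sequence.

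For a regular quadratic extension let $\Sigma$ be the associated surface, so $G_i$ has a graph of groups decomposition with vertex groups $G_{i-1}$ and $\pi_1(\Sigma)$, and let $\rho\colon G_i \onto G_{i-1}$ be the retraction whose restriction to $\pi_1(\Sigma)$ is nonabelian. Let $\alpha$ be a homeomorphism of $\Sigma$ supported in the interior (so fixing each peripheral subgroup at the level of $\pi_1$): one may take $\alpha$ pseudo-Anosov when $\Sigma$ has enough complexity, or otherwise a product of Dehn twists along two filling essential simple closed curves. Such an $\alpha$ extends to an automorphism $\tilde\alpha$ of $G_i$ that is the identity on $G_{i-1}$ (the extension is trivial on the edge groups, which are peripheral), and the composites $\phi_N := \rho\circ \tilde\alpha^N$ are retractions $G_i \onto G_{i-1}$. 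A non-trivial $g\in G_i$, written in Bass-Serre normal form, has $\pi_1(\Sigma)$-syllables whose cyclically reduced lengths in $\pi_1(\Sigma)$ grow linearly in $N$ under $\tilde\alpha^N$; since $\rho|_{\pi_1(\Sigma)}$ is non-abelian, this growth survives the retraction and $\phi_N(g)\neq 1$ for all large $N$.

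The main obstacle is the quadratic case: one must verify that a high power of a modular automorphism composed with a retraction that sends the QH subgroup to a non-abelian image truly fails to kill nontrivial elements. This amounts to running the shortening argument of Rips-Sela in reverse, and the non-abelian image condition of Definition \ref{defn:quadratic-extension} is precisely what prevents the growth from $\tilde\alpha^N$ from being collapsed by $\rho$.
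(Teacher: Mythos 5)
The paper states Proposition~\ref{prop:towers-discriminate} without proof, treating it as folklore following from \cite{KM-IrredI} and the strict-homomorphism technology, so there is no written argument to compare against; I will assess your proposal on its own terms.

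Your overall architecture is the standard one and is fine: produce a discriminating family of retractions at each level, then compose and diagonalize to get retractions onto $\freegrp$. The abelian case is essentially correct. Two small remarks: the fact that a word $w_0 u^{n_1}w_1\cdots u^{n_\ell}w_\ell$ with $w_j\notin\bk{u}=Z_{G_{i-1}}(u)$ (for $0<j<\ell$) is nontrivial when the $|n_j|$ are large is exactly the big powers condition, which holds in limit groups; you should say you are invoking it and that $G_{i-1}$ is a limit group by induction. In the singular case $G_i=G_{i-1}*A$ some of the $w_j$ may lie in $\bk{u}$ after you substitute, so a little recollection is needed before you can apply big powers, but this is routine.

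The quadratic case is where the gap is, and it is a genuine one. The sentence ``since $\rho|_{\pi_1(\Sigma)}$ is non-abelian, this growth survives the retraction'' is the entire content of the hard step and is asserted, not proved. The issue: $\rho|_{\pi_1(\Sigma)}$ generally has a large kernel $K\lhd\pi_1(\Sigma)$, and nothing in your argument shows that the orbit $\alpha^N(s_j)$ eventually escapes $K$. Indeed, if $\alpha$ is a Dehn twist $T_c$ along a curve $c$ with $\rho(c)=1$, then $\rho\circ\tilde\alpha^N=\rho$ for all $N$ and the family is constant, so discrimination fails outright; the hypothesis that $\rho(\pi_1(\Sigma))$ be non-abelian does not rule this out. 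More generally one must argue that $\bigcap_N\alpha^{-N}(K)$ is trivial (and, beyond that, that there is no unexpected cancellation with the $G_{i-1}$-syllables after pushing forward by $\rho$), and that argument is precisely the Rips--Sela shortening argument / Merzlyakov-type lifting that you gesture at in your last paragraph. Acknowledging that the missing step ``amounts to running the shortening argument in reverse'' is an accurate diagnosis, but it is not a proof; it is the theorem. Also, a minor inaccuracy: for a pseudo-Anosov the word length of a non-peripheral element grows exponentially, not linearly, and the relevant quantity is not cyclically reduced length (a conjugacy-class invariant) but the length of the actual normal-form syllables modulo edge-group coset representatives; neither affects the logic, but the phrasing suggests the key mechanism has not been pinned down. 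To make the quadratic case correct you either need to carry out the shortening argument or cite it explicitly (e.g., the test-sequence constructions of \cite{Sela-Dioph-II} or the generic families of \cite{KM-ift}).
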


Following Definition 1.15 of \cite{Sela-Dioph-II} we have:

\begin{defn}\label{defn:closure}
  Let $G$ be a tower. A \define{closure} of $G$ is another tower
  $\cl{G}$ with an embedding $\theta\colon G \into \cl{G}$ such that
  there is a commutative diagram\[
  \begin{tikzpicture}[scale=1.5]
    \node (T0) at (0,0) {$G_0$};
    \node (Ti0) at (0.5,0) {$\leq$};
    \node (T1) at (1,0) {$G_1$};
    \node (Ti1) at (1.5,0) {$\leq$};
    \node (T2) at (2,0) {$\ldots$};
    \node (Ti2) at (2.5,0) {$\leq$};
    \node (T3) at (3,0) {$G_n$};
    \node (Ti3) at (3.5,0) {$=$};
    \node (T4) at (4,0) {$G$};
    
    \node (B0) at (0,-1) {$G_0$};
    \node (Bi0) at (0.5,-1) {$\leq$};
    \node (B1) at (1,-1) {$\cl{G}_1$};
    \node (Bi1) at (1.5,-1) {$\leq$};
    \node (B2) at (2,-1) {$\ldots$};
    \node (Bi2) at (2.5,-1) {$\leq$};
    \node (B3) at (3,-1) {$\cl{G}_n$};
    \node (Bi3) at (3.5,-1) {$=$};
    \node (B4) at (4,-1) {$\cl{G}$};
    
    \draw[->] (T0) -- node[rotate=-90,above]{$=$} (B0);
    \draw[right hook->] (T1) -- (B1);
    \draw[right hook->] (T3) -- (B3);
  \end{tikzpicture}\]
  where the injections $G_i \into \cl{G}_i$ are restrictions of $\theta$
  and the horizontal lines are tower decompositions. Moreover the
  following must hold:
  \begin{enumerate}
  \item If $G_i \leq G_{i+1}$ is a regular quadratic extension with
    associated surface $\Sigma$ such that $\bdy \Sigma$ is
    ``attached'' to $\bk{u_1},\ldots,\bk{u_n} \leq G_i$ then $\cl{G}_i
    \leq \cl{G}_{i+1}$ is a regular quadratic extension with
    associated surface $\Sigma$ such that $\bdy \Sigma$ is
    ``attached'' to $\bk{\theta(u_1)},\ldots,\bk{\theta(u_n)} \leq
    \cl{G}_i$, in such a way that $\theta\colon G_i\into\cl{G}_i$
    extends to a monomorphism $\theta\colon G_{i+1} \into \cl{G}_{i+1}$
    which maps the vertex group $\pi_1(\Sigma)$ surjectively onto the
    vertex group $\pi_1(\Sigma) \leq \cl{G}_{i+1}$.

  \item If $G_i \leq G_{i+1}$ is an abelian extension then $\cl{G}_i
    \leq \cl{G}_{i+1}$ is also an abelian extension. Specifically
    (allowing $u_i=1$) if $G_{i+1} = G_i *_\bk{u_i}(\bk{u_i}\oplus
    A_i)$, then $\cl{G}_{i+1} = \cl{G}_i
    *_\bk{\theta(u_i)}(\bk{\theta(u_i)}\oplus A_i')$. Moreover we require the
    embedding $\theta\colon G_{i+1}\rightarrow \cl{G}_{i+1}$ to map
    $\bk{u_i}\oplus A_i$ to a finite index subgroup of
    $\bk{\theta(u_i)}\oplus A_i'$.
  \end{enumerate}
\end{defn}

We will now state one of the main results of \cite{KM-ift} and
\cite{Sela-Dioph-II} but first some explanations of terminology are in
order. Towers are groups that arise as completed limit groups
corresponding to a strict resolution and the definition of closure
corresponds to the one given in \cite{Sela-Dioph-II}. We also note
that our requirement on the Euler characteristic of the surface pieces
given in Definitions \ref{defn:QH} and \ref{defn:quadratic-extension}
ensures that our towers are coordinate groups of \emph{normalized} NTQ
systems as described in the discussion preceding \cite[Lemma
76]{KM-ift}, we also point out that a \emph{correcting embedding} as
described right before \cite[Theorem 12]{KM-ift} is in fact a
closure in the terminology we are using.

We now give an obvious corollary (in fact a weakening) of
\cite[Theorem1.22]{Sela-Dioph-II}, or \cite[Theorem 12]{KM-ift}; they
are the same result. Let $X,Y$ denote fixed tuples of variables.

\begin{lem}[$\forall\exists$-lifting Lemma]\label{lem:lift}
  Let $\freegrp$ be a fixed non-abelian free group and let
  \[G=\bk{\freegrp,X \mid R(\freegrp,X)}\] be a standard finite
  presentation of a tower over $\freegrp$. Let $W_i(X,Y,\freegrp)=1$
  and $V_i(X,Y,\freegrp)\neq 1$ be (possibly empty) finite systems of
  equations and inequations (resp.) If the following holds:
  \[
  \freegrp \models \forall X \exists Y \Big( R(\freegrp,X) = 1
  \rightarrow \bigvee_{i=1}^m\big(W_i(X,Y,\freegrp)=1 \wedge
  V_i(X,Y,\freegrp)\neq 1 \big) \Big)
  \]
  then there is an embedding $\theta\colon G \into \cl{G}$ into some
  closure such that
  \[
  \cl{G} \models \exists Y
  \bigvee_{i=1}^m\Big(W_i(\theta(X),Y,\freegrp)=1 \wedge
  V_i(\theta(X),Y,\freegrp)\neq 1\Big)
  \]
  where $X$ and $\freegrp$ are interpreted as the corresponding
  subsets of $G = \bk{\freegrp,X \mid R(\freegrp,X)}$
\end{lem}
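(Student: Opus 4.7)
The plan is to deduce the lemma as a direct corollary of the tower-theoretic lifting theorems of \cite[Theorem 1.22]{Sela-Dioph-II} and \cite[Theorem 12]{KM-ift}, which the paper has already noted the lemma is a weakening of. The bulk of the work lies in harvesting the right data from the hypothesis using a discriminating sequence of retractions, and in checking that our definitions match those of the sources.

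First, using Proposition \ref{prop:towers-discriminate}, I fix a discriminating sequence of retractions $\phi_i\colon G \onto \freegrp$. For each $i$ the tuple $\phi_i(X)$ is a point of the variety cut out by $R(\freegrp, X) = 1$, so the hypothesis delivers some $Y_i \in \freegrp^{|Y|}$ and an index $k_i \in \{1,\ldots,m\}$ with $W_{k_i}(\phi_i(X), Y_i, \freegrp) = 1$ and $V_{k_i}(\phi_i(X), Y_i, \freegrp) \neq 1$. After pigeonholing on $k_i$, I assume $k_i = k$ is constant, and form the finitely generated group $\tilde G = \bk{\freegrp, X, Y \mid R(\freegrp, X), W_k(X, Y, \freegrp)}$. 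The augmented assignments $X \mapsto \phi_i(X)$, $Y \mapsto Y_i$ yield homomorphisms $\tilde\phi_i\colon \tilde G \to \freegrp$. Passing to a subsequence that converges in the Gromov--Hausdorff topology of marked groups, the $\tilde\phi_i$ factor through a common limit-group quotient $L$ of $\tilde G$. Since the restrictions $\tilde\phi_i|_G = \phi_i$ already discriminate $G$, the quotient map is injective on $G \leq \tilde G$, giving an embedding $\theta\colon G \into L$. Discarding finitely many $i$, every entry of the inequation $V_k(\phi_i(X), Y_i, \freegrp)$ stays non-trivial in $L$, so $L$ satisfies the required existential formula.

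The hard part is to upgrade $L$ to an actual closure $\cl{G}$ in the sense of Definition \ref{defn:closure}. This is the precise content of the cited theorems: a limit group arising this way from a tower $G$ --- as a Gromov--Hausdorff limit of augmented retractions --- carries a tower structure extending that of $G$, with quadratic levels sharing the same surface up to readjusting boundary attaching maps as in (1) of Definition \ref{defn:closure}, and abelian levels extending by finite-index overgroups as in (2). The proof in \cite{Sela-Dioph-II, KM-ift} proceeds by induction along the levels of the tower, using the shortening argument and the Makanin--Razborov diagram for equations with coefficients in $G$. I would invoke this as a black box; the only further check is that our normalized definitions (Definitions \ref{defn:quadratic-extension}, \ref{defn:abelian-extension}, \ref{defn:tower}, \ref{defn:closure}) agree with the normalized NTQ systems and correcting embeddings of \cite{KM-ift} (and with completions and closures of strict resolutions in \cite{Sela-Dioph-II}), which the paragraph preceding the lemma already records.
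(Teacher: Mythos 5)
The paper gives no argument for this lemma at all: the surrounding text explicitly frames it as ``an obvious corollary (in fact a weakening)'' of \cite[Theorem 1.22]{Sela-Dioph-II} and \cite[Theorem 12]{KM-ift}, and the only work done is the terminological bridge recorded in the paragraphs before and after the statement (normalized NTQ systems, correcting embeddings, completions and closures of strict resolutions). So the ``intended proof'' is simply: the hypothesis of the lemma is exactly the hypothesis of those theorems, and the conclusion is a weakening of their conclusion.

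Your proposal, by contrast, wraps that citation inside a Merzlyakov-style sketch, and the wrapper introduces a real gap. The early steps are fine: pigeonholing on the disjunct index, forming $\tilde G = \bk{\freegrp, X, Y \mid R, W_k}$, passing to a limit quotient $L$ of $\tilde G$ over a subsequence of the augmented retractions $\tilde\phi_i$, observing that $G\into L$ because the $\phi_i$ already discriminate $G$, and noting that $V_k$ survives in $L$. The problem is the final step, where you assert that ``a limit group arising this way \dots{} as a Gromov--Hausdorff limit of augmented retractions --- carries a tower structure extending that of $G$'' and attribute this to the cited theorems. That is not what those theorems say, and it is not true for an arbitrary discriminating sequence of retractions together with arbitrary witnesses $Y_i$. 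The theorems assert directly that a closure with a formal solution \emph{exists}; in their proofs the retractions are chosen to be a \emph{test sequence} / \emph{generic family} (not an arbitrary discriminating sequence), and the shortening argument is applied to that particular family to produce the closure. With an arbitrary choice of $\phi_i$ and adversarial witnesses $Y_i$, the resulting $L$ need not embed in a closure of $G$ in the sense of Definition \ref{defn:closure}, so there is nothing to ``upgrade.'' Your intermediate $L$ is best thought of as an (imprecise) paraphrase of a fragment of the theorems' internal proof, not as a valid reduction to them. The clean argument is the paper's: no intermediate $L$ is needed, since the cited theorems already have exactly the hypothesis you verified and a strictly stronger conclusion.
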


In the terminology of \cite{Sela-Dioph-II} we have
$G = \bk{\freegrp,X}$ and $\cl{G} = \bk{\freegrp,X,Z}$ for some
collection of elements $Z$. Let $Y = (y_1,\ldots,y_k)$ be a tuple of
elements in $\cl{G}$ that witness the existential sentence above. A
collection of words $y_i(\freegrp,X,Z) =_{G^*} y_i$ is called a set of
\define{formal solution in $\cl{G}$.} According to \cite[Definition
24]{KM-ift} the tuple $Y \subset \cl{G}$ is an \define{$R$-lift}.

\begin{prop}\label{prop:towers-freely-conj-sep}
  Let $G$ be a tower over a non abelian free group $\freegrp $and
  let $S \subset G$ be a finite family of pairwise non-conjugate
  elements of $G$. There exists a discriminating family of retractions
  $\psi_i\colon G\onto \freegrp$ such that for each $\psi_i$ the elements of
  $\psi_i(S)$ are pairwise non-conjugate.
\end{prop}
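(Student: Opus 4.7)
The plan is to argue by contradiction using the $\forall\exists$-lifting Lemma (Lemma \ref{lem:lift}). To get a discriminating family at once, fix an exhaustion $P_1 \subset P_2 \subset \cdots$ of $G \setminus \{1\}$ by finite subsets and build, for each $n$, a retraction $\psi_n\colon G \onto \freegrp$ satisfying $\psi_n(p)\neq 1$ for all $p\in P_n$ and mapping $S$ injectively on conjugacy classes; the resulting sequence is automatically discriminating. So I fix a finite $P\subset G\setminus\{1\}$ and look for one such $\psi$.

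Fix a standard finite presentation $G=\bk{\freegrp,X\mid R(\freegrp,X)}$, under which retractions $G\onto\freegrp$ correspond to tuples $X$ with $R(\freegrp,X)=1$. Assume, for contradiction, that no retraction non-trivial on $P$ separates the conjugacy classes of $S$. This assumption translates to the $\forall\exists$-sentence
\[
\freegrp\models\forall X\,\exists z\,\Big(R(\freegrp,X)=1\rightarrow\bigvee_{p\in P}\bigl[p(\freegrp,X)=1\bigr]\vee\bigvee_{s\neq s'\in S}\bigl[z\,s(\freegrp,X)\,z^{-1}\,s'(\freegrp,X)^{-1}=1\bigr]\Big),
\]
each disjunct being a single equation with no accompanying inequation, so Lemma \ref{lem:lift} applies. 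It yields a closure $\theta\colon G\into\cl{G}$ and an element $z\in\cl{G}$ realizing one of the disjuncts in $\cl{G}$. Injectivity of $\theta$ rules out $\theta(p)=1$ for every $p\in P$, so we must have $z\,\theta(s)\,z^{-1}=\theta(s')$ in $\cl{G}$ for some distinct $s,s'\in S$.

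The main obstacle, and crux of the argument, is to turn this conjugacy in $\cl{G}$ into a conjugacy in $G$, which then contradicts the non-conjugacy of $s$ and $s'$ in $G$. This should follow from a structural property of closures: the quadratic extensions in the tower decomposition of $\cl{G}$ leave the surface pieces untouched, and the abelian extensions merely enlarge the abelian vertex groups to finite-index supergroups, so a level-by-level Bass--Serre analysis of the cyclic (resp.\ abelian) splittings of $G$ and $\cl{G}$ lets one replace a conjugator $z\in\cl{G}$ between elements of $\theta(G)$ by one that already lies in $\theta(G)$. Granting this key step, $s\sim_G s'$, and the resulting contradiction finishes the proof.
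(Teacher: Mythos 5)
You set up the $\forall\exists$-lifting argument exactly as the paper does: negate, encode the failure as a $\forall\exists$-sentence over $\freegrp$, apply Lemma \ref{lem:lift} to produce a closure $\theta\colon G\into\cl{G}$, rule out $\theta(p)=1$ by injectivity, and conclude that two elements of $S$ become conjugate in $\cl{G}$. You also correctly identify the crux: showing that non-conjugate elements of $G$ remain non-conjugate in any closure $\cl{G}$. But you explicitly write ``Granting this key step,'' so the heart of the proof is left unproved, and the sketch you offer in its place is too coarse to be repaired into an argument.

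Concretely, the paper proves the key step by induction on the height $n$ of the tower, running a case analysis against the $n$th level decomposition $D$ of $\cl{G}$ and its Bass--Serre tree $T$, using that $G$ acts on the subtree $T(G)$ with the same edge-orbit structure and that edge groups of $G$ surject onto the corresponding edge groups of $\cl{G}$ (Definition \ref{defn:closure}). In the hyperbolic case, and in the case where both elements land in the surface vertex, the paper does ``replace the conjugator,'' as you suggest. But when $G_{n-1}\leq G$ is an abelian extension and $u,v$ are both conjugate into an abelian vertex group $\bk{w}\oplus A$, replacing the conjugator does not work as stated: a conjugator between two distinct elements of an abelian vertex group in a graph-of-groups splitting is not automatically in the vertex group, and in any case the relevant observation is number-theoretic, not Bass--Serre. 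The paper instead notes that any retraction $\cl{G}\onto\freegrp$ sends $\bk{w}\oplus A$ into a cyclic subgroup, picks a discriminating retraction with $r(u)\neq r(v)$, so $r(u),r(v)$ are distinct powers of a common generator and hence non-conjugate in $\freegrp$; this contradicts conjugacy in $\cl{G}$ directly rather than by descending the conjugator to $G$. So your ``level-by-level replace the conjugator'' slogan misses this sub-case. The framework and first half of your proof match the paper; the second half, which carries the real content, is a gap.
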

\begin{proof}
  Suppose towards a contradiction that this was not the case. Then
  either there exists a finite subset $P \subset G\setminus \{1\}$
  such that for every retraction $r\colon G \onto \freegrp$, $1 \in r(P)$ or
  the elements of $r(S)$ are not pairwise non-conjugate. If we write
  elements of $P$ and $S$ as fixed words $\{p_i(\freegrp,X)\}$ and
  $\{s_j(\freegrp,X)\}$ (resp.) then we can express this as a
  sentence. Indeed, consider first the
  formula: \[\Phi_{P,S}(\freegrp,X,t) = \left(\left[ \bigvee_{p_i \in P}
  p_i(\freegrp,X)=1 \right] \vee \left[\bigvee_{(s_i,s_j) \in \Delta(S)}
  t^{-1} s_i(\freegrp,X)t = s_j(\freegrp,X) \right]\right)\] where
  $\Delta(S) = \{(x,y) \in S\times S \mid x \neq y)\}$. In English
  this says that either some element of $P$ vanishes or two distinct
  elements of $S$ are conjugated by some element $t$. We therefore
  have:
  \begin{equation}\label{eqn:formula}\freegrp \models \forall X \left[\left(R(\freegrp,X))=1 \right) \rightarrow \exists
      t\Phi_{P,S}(\freegrp,X,t)\right].
  \end{equation}
  It now follows by Lemma \ref{lem:lift} that there is some closure
  $\theta\colon G \into \cl{G}$ such that \[\cl{G} \models \exists t
  \Phi_{P,S}(\freegrp,\theta(X),t).\] Since $1 \not\in P$ and $\theta$ is
  a monomorphisms none of the $p_i(\freegrp,X)$ are trivial
  so \[\cl{G} \models \exists t \left[ \bigvee_{(s_i,s_j) \in
    \Delta(S)} \left(t^{-1} s_i(\freegrp,X)t = s_j(\freegrp,X)\right) \right].\] In
  particular there are elements $u,v \in G$ which are not conjugate in
  $G$ but are conjugate in $\cl{G}$. We will derive a
    contradiction by showing that this is impossible.

  We proceed by induction on the height of the tower. If the tower has
  height 0 then $G = \freegrp$ and the result obviously holds. Suppose
  now that the claim held for all towers of height $m \leq n$. Let $G$
  have height $n$ and let $u,v$ be non-conjugate elements of $G$ let $G
  \leq \cl{G}$ be any closure and suppose that there is some $t \in
  \cl{G} \setminus G$ such that $t u t^{-1} = v$. 

  Let $D$ be the $n^\nth$ level decomposition of $\cl{G}$ and let $T$
  be the corresponding Bass-Serre tree. Let $T(G)$ be the minimal
  $G$-invariant subtree and let $D_G$ be the splitting induced by the
  action of $G$ on $T(G)$. By Definition \ref{defn:closure} $D_G$ is
  exactly the $n^\nth$ level decomposition of $G$ and two edges of
  $T(G)$ are in the same $G$-orbit if and only if they are in the same
  $\cl{G}$-orbit. We now consider separate cases:
  \\~\\
  {\bf Case 1:} \emph{Without loss of generality $u$ is hyperbolic in the $n^\nth$ level
    decomposition of $G$.}  If $v$ is elliptic in the $n^\nth$ level
  decomposition of $G$ then it is elliptic in the $n^\nth$-level
  decomposition of $\cl{G}$ and therefore cannot be conjugate to $u$
  which acts hyperbolically on $T$.

  It follows that both $u,v$ must be hyperbolic elements with respect to the
  $n^\nth$ level decomposition of $G$. Let $l_u,l_v$ denote the axes
  of $u,v$ (resp.) in $T(G) \subset T$. Since $t u t^{-1} = v$, we
  must have $t\cdot l_u = l_v$. Let $e$ be some edge in $l_u$ then by
  the previous paragraph $t\cdot e \subset l_v$ must be in the same
  $G$-orbit as $e$, which means that there is some $g \in G$ such that
  $gt \cdot e = e$, but again by Definition \ref{defn:closure} the
  inclusion $G\leq \cl{G}$ induces a surjection of the edge groups of
  the $n^\nth$ level decomposition of $G$ to the edge groups of the
  $n^\nth$ level decomposition of $\cl{G}$, it follows that $gt \in G$
  which implies that $t \in G$ contradicting the fact that $u,v$ were
  not conjugate in $G$.
  \\~\\
  {\bf Case 2:} \emph{The elements $u,v$ are elliptic in the $n^\nth$
    level decomposition of $G$.} Suppose first that $u,v$ were
  conjugate into $G_{n-1}$, then the result follows from the fact that
  there is a retraction $G \onto G_{n-1}$ and by the induction
  hypothesis. Similarly by examining the induced splitting of $G \leq
  \cl{G}$, we see that $u$ cannot be conjugate into $G_{n-1}$ and $v$
  into the other vertex group of the $n^\nth$-level decomposition. We
  finally distinguish two sub-cases.
  \\~\\
  {\bf Case 2.1:} \emph{$G_{n-1} \leq G$ is an abelian extension by
    the free abelian group $A$ and $u,v$ are conjugate in $G$ into
    some free abelian group $\bk{w}\oplus A$.} Any homomorphic image
  of $\bk{w}\oplus A$ in $\freegrp$ must lie in a cyclic group, since
  $u \neq v$ in $\cl{G}$ and $\cl{G}$ is discriminated by retractions
  onto $\freegrp$, there must be some retraction $r\colon \cl{G}\rightarrow
  \freegrp$ such that $r(u)\neq r(v)$ which means that $u,v$ are sent
  to distinct powers of a generator of the cyclic subgroup
  $r(\bk{w}\oplus A)$. It follows that their images are not conjugate
  in $\freegrp$ so $u,v$ cannot be conjugate in $\cl{G}$.
  \\~\\
  {\bf Case 2.2:} \emph{$G_{n-1} \leq G$ is a quadratic extension and
    $u$ and $v$ are conjugate in $G$ into the vertex group
    $\pi_1(\Sigma)$.} Arguing as in Case 1 we find that if there is
  some $t \in \cl{G}$ such that $t u t^{-1} = v$ then there is some $g
  \in G$ such that $gt$ fixes a vertex of $T(G) \subset T$ whose
  stabilizer is conjugate to $\pi_1(\Sigma)$. Again by the
  surjectively criterion in item 1. of Definition \ref{defn:closure},
  $gt \in G$ contradicting the fact that $u,v$ were not conjugate in
  $G$. All the possibilities have been exhausted so the result
  follows.
\end{proof}

\begin{proof}[proof of Theorem \ref{thm:generic-sequence}]
  Let $S_1 \subset S_2 \subset S_3 \subset \ldots$ be an exhaustion of
  representatives of distinct conjugacy classes of $G$ by finite
  sets. For each $S_j$ let $\{\psi^j_i\}$ be the discriminating
  sequence given by Proposition \ref{prop:towers-freely-conj-sep}. We take
  $\{\phi_i\}$ to be the diagonal sequence $\{\psi^i_i\}$. This
  sequence is necessarily discriminating and the result follows.
\end{proof}

It is worthwhile to point out that \emph{test sequences} given in the
proof of \cite[Theorem 1.18]{Sela-Dioph-II} or the \emph{generic
  sequence} given in \cite[Definition 44]{KM-ift}, because of their
properties, must satisfy the conclusions of Theorem
\ref{thm:generic-sequence}.  As an immediate consequence of the Sela's
completion construction (\cite[Definition 1.12]{Sela-Dioph-II}) or
canonical embeddings into NTQ groups (\cite[\S 7]{KM-elementary})
Theorem \ref{thm:generic-sequence} implies the following:

\begin{cor}\label{cor:strict-discriminate}
  Let $L$ be a limit group and suppose that for some finite set $S
  \subset L$ there is a homomorphism $f\colon L\to\freegrp$ such
  that: \begin{itemize}
  \item The elements of $f(S)$ are pairwise non-conjugate.
  \item There is a factorization \[f = f_m \circ f_{m-1} \circ \cdots
    \circ f_1\] such that each $f_i$ is a \emph{strict} homomorphisms
    between limit groups (see Definition \ref{defn:strict}).
  \end{itemize}
  Then there is a discriminating sequence $\psi_i\colon L\to \freegrp$ such
  that for all $i$ the elements $\psi_i(S)$ are pairwise non-conjugate.
\end{cor}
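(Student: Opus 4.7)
The plan is to embed $L$ into a tower tailored to the given strict factorization, and then transport the conclusion of Theorem \ref{thm:generic-sequence} back to $L$. The factorization $f=f_m\circ\cdots\circ f_1$ is precisely a \emph{strict resolution} of $L$ terminating in $\freegrp$, so the machinery of Sela's completion construction (\cite[Definition 1.12]{Sela-Dioph-II}) or equivalently the canonical embedding into NTQ groups (\cite[\S 7]{KM-elementary}) applies. I would invoke that machinery to produce a tower $T$ over $\freegrp$ together with an embedding $\iota\colon L\hookrightarrow T$ whose level structure mirrors the given strict resolution. A defining property of the completion is that any homomorphism of $L$ to $\freegrp$ which factors through the resolution extends to $T$; in particular there is $\tilde f\colon T\to\freegrp$ such that $\tilde f\circ\iota=f$.

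With $T$ and $\iota$ in hand, the second step is to observe that $\iota(S)\subset T$ is a set of pairwise non-conjugate elements. Indeed, if $\iota(s)$ and $\iota(s')$ were conjugate in $T$ for distinct $s,s'\in S$, then applying $\tilde f$ would give $f(s)\sim f(s')$ in $\freegrp$, contradicting the first hypothesis on $f$. Now I would apply Theorem \ref{thm:generic-sequence} to the tower $T$ and the finite subset $\iota(S)$: this yields a discriminating sequence of retractions $\phi_i\colon T\onto\freegrp$ and an integer $N$ such that $\phi_j(\iota(S))$ is pairwise non-conjugate in $\freegrp$ for all $j\geq N$.

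The final step is to pull back along $\iota$. Define $\psi_i=\phi_{i+N}\circ\iota\colon L\to\freegrp$. Because any finite $P\subset L\setminus\{1\}$ maps into $T\setminus\{1\}$ under the injection $\iota$, the discriminating property of $\{\phi_i\}$ on $T$ immediately gives the discriminating property of $\{\psi_i\}$ on $L$; moreover $\psi_i(S)=\phi_{i+N}(\iota(S))$ is pairwise non-conjugate for every $i$, as required.

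The only non-routine point is the invocation of the completion construction and the fact that $f$ itself (not merely a homomorphism with the same kernel pattern) extends to $T$. This is exactly what is packaged into Sela's notion of completion of a strict resolution, and equivalently into the $R$-lifting/canonical embedding formalism of Kharlampovich--Miasnikov discussed just after Lemma \ref{lem:lift}; once that is granted, the argument is a straightforward transport of Theorem \ref{thm:generic-sequence} along the embedding $\iota$.
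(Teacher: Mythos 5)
Your proposal is correct and follows exactly the route the paper intends: the paper states the corollary as ``an immediate consequence'' of Sela's completion construction (or the canonical NTQ embedding) together with Theorem \ref{thm:generic-sequence}, and your argument is a careful unpacking of that: build the completion $T$ of the strict resolution determined by $f=f_m\circ\cdots\circ f_1$, use the retraction $\tilde f\colon T\onto\freegrp$ with $\tilde f\circ\iota=f$ to transport pairwise non-conjugacy of $f(S)$ up to $\iota(S)\subset T$, apply Theorem \ref{thm:generic-sequence} to the tower $T$, and restrict the resulting discriminating sequence of retractions along the embedding $\iota$. One small caveat worth flagging: the sentence asserting that \emph{any} homomorphism $L\to\freegrp$ factoring through the resolution extends to $T$ is stronger than what you use or need; what the completion actually provides, and all your argument requires, is that the specific map $f$ built from the resolution factors as $\tilde f\circ\iota$ through the tower. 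With that phrasing tightened, the argument is complete.
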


\section{Refinements}\label{sec:refinements}

\subsection{$\pi_1(\Udder)$ is almost \frcs.}
\label{sec:almost-fcs}

The limit group $L$ constructed in Section \ref{sec:other-case} had an
abundance of pairs of nonconjugate elements whose images had to have
conjugate images in every free quotient. The situation is completely
different for our Magnus pair group.

\begin{prop}\label{prop:only-u-v}
  $\bk{u},\bk{v} \leq \pi_1(\Udder)$ are the only maximal cyclic
  subgroups of $\pi_1(\Udder)$ whose conjugacy classes cannot be
  separated via a homomorphism to a free group $\pi_1(\Udder) \to
  \freegrp$.
\end{prop}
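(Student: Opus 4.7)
The plan is to apply Corollary~\ref{cor:strict-discriminate}: for every pair of non-conjugate maximal cyclic subgroups $\bk{a},\bk{b} \leq \pi_1(\Udder)$ with $\{[\bk{a}]_\pm,[\bk{b}]_\pm\}\neq\{[\bk{u}]_\pm,[\bk{v}]_\pm\}$, it suffices to exhibit a single strict homomorphism $\rho:\pi_1(\Udder)\to\freegrp$ to a free group under which $\bk{\rho(a)}$ and $\bk{\rho(b)}$ are non-conjugate; the corollary then upgrades this to a discriminating sequence of free-group homomorphisms separating the pair. The base strict map is $\rho_0:\pi_1(\Udder)\to\freegrp_3$ from Figure~\ref{fig:2}, which is strict with respect to the 2-acylindrical cyclic splitting $D$ of Lemma~\ref{lem:uv-Magnus-pair} by Lemma~\ref{lem:map-is-nice}: the only vertex groups of $D$ are cyclic (abelian) or QH, so Definition~\ref{defn:strict} reduces to injectivity on $\bk{u},\bk{v}$ and on each edge group, plus non-abelian image on the QH surface groups. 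Pre-composing $\rho_0$ with the modular automorphisms of $\pi_1(\Udder)$ preserving $D$---Dehn twists along cyclic edges and mapping-class elements of $\Sigma_u,\Sigma_v$---yields a large family of strict homomorphisms to $\freegrp_3$.

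Next I would classify maximal cyclic subgroups of $\pi_1(\Udder)$ by their action on the Bass--Serre tree $T_D$: up to conjugacy they are either (i) $\bk{u}$ or $\bk{v}$, (ii) generated by an element of a QH vertex group not conjugate within that vertex group to a boundary curve, or (iii) generated by an element acting hyperbolically on $T_D$. By 2-acylindricity of $D$, the only conjugacy-class identifications forced inside the QH vertex group $\pi_1(\Sigma_u)=\bk{a,b,c,d\mid abcd=1}$ upon passing to $\pi_1(\Udder)$ are among the four boundary classes $\{[a]_\pm,[b]_\pm,[c]_\pm,[d]_\pm\}$, which collapse to $[v]_\pm$ and $[u]_\pm$; the Magnus pair is precisely this forced collision. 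Hence if $\bk{a},\bk{b}$ are non-conjugate in $\pi_1(\Udder)$ and the pair is not the Magnus pair, they must be distinguishable by some combination of type, choice of vertex group, translation length on $T_D$, or conjugacy class within a single QH vertex group.

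The case analysis then proceeds: when at least one of the pair is of type (iii), or when the pair consists of type-(ii) cyclics in distinct vertex groups, the combinatorial data is preserved by any strict map injective on edge groups, so $\rho_0$ (or a Dehn-twist variant) separates them; when both are of type (ii) in the same QH vertex group, the non-conjugacy inherited from that free vertex group persists under a sufficiently generic modular-twist variant of $\rho_0$. The main obstacle is the mixed case---say $\bk{a}=\bk{u}$ of type (i) and $\bk{b}$ of type (ii): since $\rho_0(u)\sim\rho_0(v)$ already in $\freegrp_3$, one must verify that for every non-boundary generator $b$ of a QH vertex group, some modular-twist variant of $\rho_0$ places $\rho(b)$ outside the conjugacy class of $\rho(u)$. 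This is achieved by a quantitative Dehn-twist argument: a large twist along an edge of $D$ adjacent to the support of $b$ alters the cyclically reduced form of $\rho(b)$ in $\freegrp_3$ while leaving the conjugacy class of $\rho(u)$ unchanged, since $u$ is fixed by any Dehn twist whose edge group contains it. Once such a $\rho$ is in hand, Corollary~\ref{cor:strict-discriminate} produces the required free quotient and completes the proof.
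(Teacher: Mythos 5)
Your plan differs genuinely from the paper's. The paper's proof embeds $\pi_1(\Udder)$ into a hyperbolic tower
\[ L = \bk{\pi_1(\Udder),\freegrp_3 ,s \mid u = \rho(u), s v s^{-1}= \rho(v)}, \]
shows via a direct Bass--Serre argument on the splitting of $L$ that any pair $\alpha,\beta$ which is non-conjugate in $\pi_1(\Udder)$ and \emph{not} the Magnus pair remains non-conjugate in $L$, and then quotes Theorem~\ref{thm:generic-sequence} for $L$ as a tower over $\freegrp_3$. That structure sidesteps the need to exhibit, for each pair, an explicit separating strict homomorphism to a free group: the separating homomorphism is produced by the tower machinery once non-conjugacy in $L$ is established, and the non-conjugacy in $L$ is a malnormality/translation-length argument on $T_D$, not a statement about images in $\freegrp_3$.

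Your route --- applying Corollary~\ref{cor:strict-discriminate} after constructing a separating strict map pair-by-pair --- is a priori legitimate, but the construction of that map is where the argument breaks down, and the gap is precisely in the cases you flag as hard. In the mixed case ($\bk{u}$ versus an interior element $b$ of a QH vertex), ``a large Dehn twist along an edge adjacent to the support of $b$ alters the cyclically reduced form of $\rho(b)$'' is not a proof: Dehn twists along the edges incident to $\pi_1(\Sigma_u)$ act on $\pi_1(\Sigma_u)$ by conjugation by boundary elements, which does \emph{not} change the conjugacy class of $\rho_0(b)$ in $\freegrp_3$; you would need genuine mapping-class-group elements of $\Sigma_u$, and even then you must rule out the possibility that $\rho_0(\sigma(b))$ stays in the conjugacy class of $\rho_0(u)$ for every such $\sigma$, which is exactly the kind of statement Theorem~\ref{thm:generic-sequence} exists to supply. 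A second unaddressed issue: Lemma~\ref{lem:map-is-nice} only guarantees that $\rho_0$ is non-abelian on the QH vertex groups, not injective, so in your same-vertex case (ii)--(ii) you cannot assume the non-conjugacy of $a,b$ inside $\pi_1(\Sigma_u)$ survives $\rho_0$; if both lie in $\ker(\rho_0|_{\pi_1(\Sigma_u)})$, Dehn twists along incident edges again do nothing. In effect, making your case analysis watertight would require re-deriving the consequence of Theorem~\ref{thm:generic-sequence} that the paper instead obtains cleanly by building the tower $L$ and working with the Bass--Serre tree of its splitting $D$ directly. I'd recommend switching to the tower embedding: it converts the problem from ``find a free quotient separating the pair'' to ``verify non-conjugacy in a specific limit group'', and the latter is a concrete tree argument.
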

\begin{proof}
  We begin by embedding $\pi_1(\Udder)$ into a hyperbolic tower. Let
  $\rho\colon \pi_1(\Udder) \onto \freegrp_3$ be the strict homomorphism
  given in Figure \ref{fig:2}. Consider the group
  \[ L =
  \bk{\pi_1(\Udder),\freegrp_3 ,s \mid u = \rho(u), s v s^{-1}= \rho(v)}.
  \]
  This presentation naturally gives a splitting $D$ of $L$ given in
  Figure \ref{fig:3}.
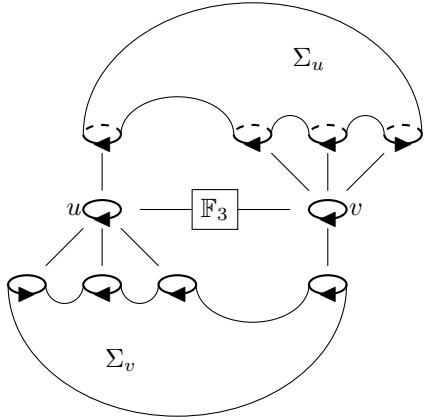
\begin{figure}[htb]
\centering
\begin{tikzpicture}[scale=0.5]
  \abovecollar{4}{2}{0.5}{0.25}
  \draw (3.5,1.75) node {${\blacktriangleleft}$};
  \abovecollar{8}{2}{0.5}{0.25}
  \draw (7.5,1.75) node {${\blacktriangleleft}$};
  \abovecollar{10}{2}{0.5}{0.25}
  \draw (9.5,1.75) node {${\blacktriangleleft}$};
  \abovecollar{12}{2}{0.5}{0.25}
  \draw (11.5,1.75) node {${\blacktriangleright}$};
  
  \lazyllipse{2}{-2}{0.5}{0.25}
  \draw (1.5,-2.25) node {${\blacktriangleright}$};
  \lazyllipse{4}{-2}{0.5}{0.25}
  \draw (3.5,-2.25) node {${\blacktriangleleft}$};
  \lazyllipse{6}{-2}{0.5}{0.25}
  \draw (5.5,-2.25) node {${\blacktriangleleft}$};
  \lazyllipse{10}{-2}{0.5}{0.25}
  \draw (9.5,-2.25) node {${\blacktriangleleft}$};

  \draw (3,2) arc (180:0:4.5 and 3.5);
  \draw (4,2) arc (180:0:1.5 and 1);
  \draw (8,2) arc (180:0:0.5 and 0.5);
  \draw (10,2) arc (180:0:0.5 and 0.5);

  \draw (10,-2) arc (360:180:4.5 and 3.5);
  \draw (9,-2) arc (360:180:1.5 and 1);
  \draw (5,-2) arc (360:180:0.5 and 0.5);
  \draw (3,-2) arc (360:180:0.5 and 0.5);
  
  \lazyllipse{4}{0}{0.5}{0.25}
  \lazyllipse{10}{0}{0.5}{0.25}
    
  \draw (3.5,1.5) -- (3.5,0.5)
  (3,-0.5) -- (2,-1.5)
  (3.5,-0.5) -- (3.5,-1.5)
  (4,-0.5) -- (5,-1.5);
  
  \draw (9.5,-1.5) -- (9.5,-0.5)
  (9,0.5) -- (8,1.5)
  (9.5,0.5) -- (9.5,1.5)
  (10,0.5) -- (11,1.5);
  
  \draw (3.5,-0.25) node{$\large{\blacktriangleleft}$}
  (2.75,0) node{$u$};

  \draw (9.5,-0.25) node{$\large{\blacktriangleleft}$}
  (10.25,0) node{$v$};
  \draw(9,4) node {${\Sigma_u}$};
  \draw(4,-4) node {${\Sigma_v}$};
  \draw (6,0) -- (4.5,0);
  \draw (7,0) -- (8.5,0);
  \node (a) at (6.5,0) [rectangle,draw,fill=white] {$\freegrp_3$};
\end{tikzpicture}
\caption{The splitting $D$ of $L$.}
\label{fig:3}
\end{figure}
We have a retraction $\rho*\colon L \onto \freegrp_3$ given by\[
\rho*\colon \left\{\begin{array}{l}
    g \mapsto \rho(g); g \in \pi_1(\Udder)\\
    f \mapsto f; f \in \freegrp_3\\
    s \mapsto 1\\
  \end{array}\right.
\] It therefore follows that $L$ is a hyperbolic tower over $\freegrp_3$.

Claim: \emph{if $\alpha, \beta \in \pi_1(\Udder) \leq L$ are
  non-conjugate in $\pi_1(\Udder)$ and $\alpha,\beta$ are not both
  conjugate to $\bk{u}$ or $\bk{v}$ in $\pi_1(\Udder)$ then they are
  not conjugate in $L$.} If both $\alpha$ and $\beta$ are elliptic,
then this follows easily from the fact that the vertex groups are
malnormal in $L$. Also $\alpha$ cannot be elliptic while $\beta$ is
hyperbolic. Suppose now that $\alpha,\beta$ are hyperbolic. Let $T$ be
the Bass-Serre tree corresponding to $D$ and let $T' =
T(\pi_1(\Udder))$ be the minimal $\pi_1(\Udder)$ invariant
subtree. Suppose that there is some $s \in L$ such that $s \alpha
s^{-1} = \beta$, then as in the proof of Proposition
\ref{prop:towers-freely-conj-sep} and Proposition
we find that for some $g \in \pi_1(\Udder)$ either $gs$ permutes two
edges in $T'$ that are in distinct $\pi_1(\Udder)$-orbits or it fixes
some edge in $T'$. The former case is impossible and it is easy to see
that the latter case implies that $gs \in \pi_1(\Udder)$. Therefore we
have a contradiction to the assumption that $\alpha,\beta$ are not
conjugate in $\pi_1(\Udder)$. The claim is now proved.

It therefore follows that if $\alpha,\beta \in \pi_1(\Udder) \leq L$
are as above, then by Theorem \ref{thm:generic-sequence} there exists
some retraction $r\colon L \onto \freegrp_3$ such that $r(\alpha),r(\beta)$
are non-conjugate. 
\end{proof}

This construction gives an alternative proof to the fact that
$\pi_1(\Udder)$ is a limit group. The group $L$ constructed is a
triangular quasiquadratic group and the retraction $\rho^*$ makes it
non-degenerate, and therefore an NTQ group. $L$ and therefore
$\pi_1(\Udder)\leq L$ are therefore limit groups by \cite{KM-IrredI}.

\subsection{$C$-doubles do not contain Magnus pairs.}\label{sec:no-mag-pairs}

Theorem \ref{thm:generic-sequence} enables us to examine a $C$-double
$\idouble$ more closely.

\begin{prop}\label{prop:ivanov-double-no-magnus}
  The $C$-double $\idouble$ constructed in Section
  \ref{sec:other-case} does not contain a Magnus pair.
\end{prop}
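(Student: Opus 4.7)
The plan is to assume, for contradiction, that $g, h \in \idouble$ form a Magnus pair and to push this pair forward along the family of homomorphisms $\idouble \to \freegrp$ identified in Section \ref{sec:other-case}. For each $n \in \Z$, the assignment $x \mapsto x, y \mapsto y, r \mapsto w^n x w^{-n}, s \mapsto w^n y w^{-n}$ defines a surjective retraction $\alpha_n \colon \idouble \onto \freegrp(x,y)$ (well-defined because $w(w^n x w^{-n}, w^n y w^{-n}) = w^n w w^{-n} = w$), and there is a symmetric family $\alpha'_n \colon \idouble \onto \freegrp(r,s)$. Since $\ncl{g} = \ncl{h}$ in $\idouble$ and each $\alpha_n$ is surjective, one has $\ncl{\alpha_n(g)} = \ncl{\alpha_n(h)}$ in $\freegrp(x,y)$, so Theorem \ref{thm:Magnus} yields $\alpha_n(g) \sim_\pm \alpha_n(h)$ in $\freegrp(x,y)$ for every $n \in \Z$, and symmetrically for $\alpha'_n$.

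Next, I will run through the cases according to the amalgamated normal forms of $g$ and $h$ in $\idouble = \freegrp(x,y) *_{\bk{w}} \freegrp(r,s)$. If both $g$ and $h$ are elliptic and can be conjugated into a common factor, say $\freegrp(x,y)$, then applying $\rho = \alpha_0$ returns them unchanged and Magnus's theorem inside $\freegrp(x,y)$ already gives $g \sim_\pm h$ in $\idouble$, a contradiction. If $g$ is conjugate into $\freegrp(x,y)$ and $h$ into $\freegrp(r,s)$, applying $\rho$ produces the mirror image $\bar h \in \freegrp(x,y)$ with $g \sim_\pm \bar h$ in $\freegrp(x,y)$, so $\ncl{\bar h} = \ncl{g} = \ncl{h}$ in $\idouble$; unless $\bar h$ is conjugate to a power of $w$ (in which case $h$ is conjugate to the same power of $w$ in $\idouble$, and $g \sim_\pm h$ in $\idouble$ follows), the pair $(\bar h, h)$ is a mirror pair, and the calculation at the end of Section \ref{sec:other-case} showing that mirror pairs are not Magnus pairs gives the desired contradiction.

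The main obstacle is the hyperbolic case, in which without loss of generality $g$ has amalgamated normal form $g_1 g_2 \cdots g_k$ with $k \geq 2$. Here $\alpha_n(g)$ is the word in $\freegrp(x,y)$ obtained by leaving $\freegrp(x,y)$-syllables of $g$ untouched and replacing each $\freegrp(r,s)$-syllable $g_i$ by $w^n \bar g_i w^{-n}$; for $|n|$ large the $w^{\pm n}$ blocks do not cancel with the surrounding syllables, and the cyclically reduced length of $\alpha_n(g)$ in $\freegrp(x,y)$ grows linearly in $|n|$ with slope determined by the number of $\freegrp(r,s)$-syllables of $g$. Comparing with the analogous expansion of $\alpha_n(h)$ and using that $\alpha_n(g) \sim_\pm \alpha_n(h)$ for every sufficiently large $n$, the uniform conjugacy will force the syllable counts of $g$ and $h$ to match and each $g_i$ to coincide with some $h_j^{\pm 1}$ up to a cyclic rotation, yielding $g \sim_\pm h$ in $\idouble$ and completing the contradiction. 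The delicate point is controlling potential cancellations between $w^{\pm n}$ and the neighbouring $\freegrp(x,y)$-syllables; I expect the indivisibility of $w$ provided by Theorem \ref{thm:c-test}, together with the symmetric information coming from the family $\alpha'_n$, to supply the rigidity needed to pin down the syllables exactly.
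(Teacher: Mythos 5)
Your elliptic-case analysis is essentially the paper's. For the hyperbolic case you take a genuinely different route: you push $g, h$ along the parametric family of retractions $\alpha_n\colon r \mapsto w^n x w^{-n},\ s \mapsto w^n y w^{-n}$ and try to read off a conjugacy in $\idouble$ from the growth of the cyclically reduced lengths of $\alpha_n(g)$ and $\alpha_n(h)$. The paper instead embeds $\idouble$ into the centralizer (HNN) extension $\freegrp*^t_{\bk{w}}$ via $r \mapsto t^{-1}xt,\ s \mapsto t^{-1}yt$, notes that $\ncl{u}_{\idouble} = \ncl{v}_{\idouble}$ forces $\ncl{u}_{\freegrp*^t_{\bk{w}}} = \ncl{v}_{\freegrp*^t_{\bk{w}}}$, invokes Theorem~\ref{thm:generic-sequence} (towers are \frcs, hence have no Magnus pairs) to conclude $u \sim_\pm v$ in $\freegrp*^t_{\bk{w}}$, and then uses Britton's Lemma to show that the conjugator must be ``$t^{-1}*t$-syllabic'' and hence already in the image of $\idouble$. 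Your $\alpha_n$ are precisely the compositions of that embedding with the retractions $t \mapsto w^{-n}$ of $\freegrp*^t_{\bk{w}}$ onto $\freegrp$, so the two arguments are built on the same underlying structure; the paper simply lets Theorem~\ref{thm:generic-sequence} do the heavy combinatorial lifting that you are attempting to do by hand.

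As written, though, your hyperbolic case has a genuine gap. The decisive claim --- that $\alpha_n(g) \sim_\pm \alpha_n(h)$ for all large $n$ forces the syllable counts of $g$ and $h$ to agree and each $g_i$ to match some $h_j^{\pm 1}$ up to cyclic rotation --- is asserted, not proved. You flag the cancellation between the $w^{\pm n}$ blocks and the adjacent $\freegrp(x,y)$-syllables as the ``delicate point'' and say you ``expect'' the indivisibility of $w$ to supply the rigidity, but this is exactly where the work lies. One needs at minimum a quantitative bounded-cancellation statement (e.g.\ malnormality of $\bk{w}$ in $\freegrp(x,y)$, which holds since $w$ is indivisible) to locate the $w^{\pm n}$ landmarks inside the cyclic word, and then an argument that the resulting alignment of syllables is \emph{consistent across $n$}: a priori the cyclic rotation and sign witnessing $\alpha_n(g) \sim_\pm \alpha_n(h)$ could vary with $n$, and you have not explained why a single conjugator in $\idouble$ emerges. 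Until this alignment argument is actually carried out, the proof is incomplete at its most critical point, which is precisely the point the paper's detour through the HNN extension and Theorem~\ref{thm:generic-sequence} is designed to avoid.
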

\begin{proof}
  We need to show that if two elements $u,v$ of $\idouble$ have the
  same normal closure in $\idouble$ then they must be
  conjugate. Suppose that $u,v$ are both elliptic with respect to the
  splitting (as a double) of $\idouble$ but not conjugate. By Theorem
  \ref{thm:c-test} if they are conjugate to a mirror pair $(u^g,v^h)$
  for some $g,h \in \idouble$ then they do not form a Magnus pair,
  i.e. they have separate normal closures. Otherwise there are
  homomorphisms $\idouble \to \freegrp$ in which $u,v$ have
  non-conjugate images, therefore by Theorem \ref{thm:Magnus} the
  normal closures of their images are distinct; so
  $\ncl{u} \neq \ncl{v}$ as well.

  Suppose now that $u$ or $v$ is hyperbolic in $\idouble$. Recall the
  generating set $x,y,r,s$ for $\idouble$ given in Definition
  \ref{def:double}. Let $\freegrp = \freegrp(x,y)$ and consider the
  embedding into a centralizer extension, represented as an HNN
  extension
  \begin{align*}
    \idouble & \into \bk{\freegrp,t | t w(x,y) = w(x,y)t} = \freegrp*^t_\bk{w}\\
    x & \mapsto x,~~~ y  \mapsto y\\
    r & \mapsto t^{-1}xt,~~~ s  \mapsto t^{-1}yt
  \end{align*}
  The stable letter $t$ makes mirror pairs conjugate in this bigger
  group.  A hyperbolic element of $\idouble$ can be written as a product
  of syllables\[ u = a_1(x,y)a_2(r,s)\cdots a_l(r,s)
  \] with $a_1$ or $a_l$ possibly trivial. The image of $u$ in
  $\freegrp*^t_\bk{w}$ is \[
    u = a_1(x,y)\left(t^{-1}a_2(x,y)t\right)\cdots
    \left(t^{-1}a_l(x,y)t\right).\] Consider the set of words of the
  form  \[
    w_1(x,y)\left(t^{-1}w_2(x,y)t\right)\cdots w_{N-1}(x,y)\left(t^{-1}w_{N}(x,y)t\right),
  \] with $w_1$ or $w_N$ possibly trivial. This set is clearly closed
  under multiplication, inverses and passing to
  $\freegrp_{\bk w}^t$-normal form. It follows that we can identify
  the image of $\idouble$ with this set of words, which we call
  \define{$t^{-1}*t$-syllabic words}. Each factor $w_i(x,t)$ or
  $t^{-1}w_j(x,y)t$ is called a \define{$t^{-1}*t$-syllable}.

  It is an easy consequence of Britton's Lemma that if $u$ is a
  hyperbolic, i.e. with cyclically reduced syllable length more than
  1, $t^{-1}*t$-syllabic word and $g^{-1}ug$ is again
  $t^{-1}*t$-syllabic for some $g$ in $\freegrp*_\bk{w}^t$ then $g$
  must itself be $t^{-1}*t$-syllabic. Indeed this can be seen by
  cyclically permuting the $\freegrp*_\bk{w}^t$-syllables of a
  cyclically reduced word $u$. We refer the reader to \cite[\S
  IV.2]{Lyndon-Schupp-1977} for further details about normal forms and
  conjugation in HNN extensions.

  Suppose now that $u,v$ are non conjugate in $\idouble$, but have the
  same normal closure in $\idouble$. Since at least one of them is
  hyperbolic in $\idouble$, it is clear from the embedding that its
  image must also be hyperbolic with respect to the HNN splitting
  $\freegrp*_\bk{w}^t$. Now, since
  $\ncl{u}_\idouble = \ncl{v}_\idouble$, in the bigger group
  $\freegrp*_\bk{w}^t$ we
  have:\[ \ncl{u}_{\freegrp*_\bk{w}^t} =
    \ncl{\ncl{u}_\idouble}_{\freegrp*_\bk{w}^t} =
    \ncl{\ncl{v}_\idouble}_{\freegrp*_\bk{w}^t} =
    \ncl{v}_{\freegrp*_\bk{w}^t} \]

  By Theorem \ref{thm:generic-sequence} or \cite{Lioutikova-CRF}
  centralizer extensions are freely conjugacy separable, therefore
  they cannot contain Magnus pairs. It follows that $u,v$ must be
  conjugate in the bigger $\freegrp*_\bk{w}^t$. Let
  $g^{-1}ug=_{\freegrp*_{\bk w}^t} v$. Now both $u$ and $v$ must be
  hyperbolic so it follows that $g$ must also be a $t^{-1}*t$-syllabic
  word; thus $g$ is in the image of $\idouble$ of
  $\freegrp*_{\bk w}^t$. Furthermore since the map
  $\idouble \into \freegrp*_{\bk w}^t$ is an
  embedding\[ g^{-1}ug=_{\freegrp*_{\bk w}^t} v \Rightarrow
    g^{-1}ug=_\idouble v,
  \] contradicting the fact that $u,v$ are non conjugate in $\idouble$.
\end{proof}

\subsection{A non-tower limit group that is \frcs}
\label{sec:tower-non-eg}

In this section we construct a limit group that is \frcs\ but which
does not admit a tower structure. Let $H \leq [\freegrp,\freegrp]$ be
some f.g. malnormal subgroup of $\freegrp$, e.g.
$H = \bk{aba^{-1}b^{-1},b^{-2}a^{-1}b^2a} \leq \freegrp(a,b)$. And
pick $h \in H \setminus [H,H]$ such that $H$ is \define{rigid}
relative to $h$, i.e. $H$ has no non-trivial cyclic or free splittings
relative to $\bk{h}$.  Because $h \in [\freegrp,\freegrp]$ there is is
a quadratic extension \[\freegrp < \freegrp*_\bk{h} \pi_1(\Sigma) \]
where $\Sigma$ has one boundary component and has genus
$g = \textrm{genus}(h)$, in particular there is a retraction onto
$\freegrp$. Consider now the subgroup $L = H*_\bk{h}\pi_1(\Sigma)$.

\begin{prop}
  $L$ as above is \frcs.
\end{prop}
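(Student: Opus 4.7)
The plan is to embed $L$ into a tower and invoke Theorem \ref{thm:generic-sequence}. Let $\hat L = \freegrp *_{\bk{h}} \pi_1(\Sigma)$; by hypothesis this is a regular quadratic extension of $\freegrp$ (witnessed by the genus of $\Sigma$ being $\textrm{genus}(h)$), hence a tower of height one over $\freegrp$. The inclusion $H \hookrightarrow \freegrp$ extends to an embedding $L = H *_{\bk{h}} \pi_1(\Sigma) \hookrightarrow \hat L$, and the proposition will follow once I establish the reduction: \emph{any pair of elements of $L$ that is non-conjugate in $L$ remains non-conjugate in $\hat L$.} Granting the reduction, any finite pairwise non-conjugate subset $S\subset L$ is pairwise non-conjugate in $\hat L$, so applying Theorem \ref{thm:generic-sequence} to $\hat L$ produces a discriminating sequence of retractions $\phi_i\colon \hat L \onto \freegrp$ whose images $\phi_i(S)$ are eventually pairwise non-conjugate; restricting to $L$ yields the required witness that $L$ is \frcs.

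To prove the reduction I would perform a Bass--Serre analysis parallel to those in the proofs of Propositions \ref{prop:towers-freely-conj-sep} and \ref{prop:only-u-v}. Let $T$ be the Bass--Serre tree of $\hat L$'s splitting and let $T(L) \subset T$ be the minimal $L$-invariant subtree. A normal form argument shows $L\cap\freegrp = H$ and $L\cap\pi_1(\Sigma)=\pi_1(\Sigma)$, so $T(L)$ is the Bass--Serre tree of $L$'s splitting, sharing with $T$ its edge stabilizer $\bk{h}$ and its bipartition between $\freegrp$-type and $\pi_1(\Sigma)$-type vertices. Suppose $t \in \hat L$ conjugates non-conjugate $\alpha,\beta \in L$. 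If both elements are hyperbolic in their splittings, or are elliptic at $\pi_1(\Sigma)$-type vertices (where $L$ and $\hat L$ share the vertex stabilizer), the edge-orbit/vertex-orbit arguments used in Proposition \ref{prop:towers-freely-conj-sep} show $t\in L$, a contradiction. If $\alpha,\beta$ are elliptic at $\freegrp$-type vertices, namely conjugate into $H$ in $L$, the same vertex-orbit argument reduces to $t \in \freegrp$ with $t\alpha t^{-1}=\beta \in H$, and then \emph{malnormality of $H$ in $\freegrp$} forces $t \in H \leq L$. Mixed vertex types are ruled out by the bipartition. The remaining case, in which $\alpha$ or $\beta$ is conjugate into $\bk{h}$, is handled by observing that $\bk{h}$ is malnormal in both $L$ and $\hat L$ (inherited from its malnormality in each vertex group), so conjugacy classes of powers of $h$ coincide in $L$ and in $\hat L$.

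The main obstacle is the Bass--Serre bookkeeping in the reduction, but it introduces no idea not already present in Propositions \ref{prop:towers-freely-conj-sep} and \ref{prop:only-u-v}; the essential new ingredient is the malnormality of $H$ in $\freegrp$, which plays the role that the closure condition on edge-group envelopes played in those proofs. As a minor secondary point, one should note that $L$ is automatically a limit group, being a finitely generated subgroup of the tower $\hat L$, so the statement genuinely lies within the scope of the paper.
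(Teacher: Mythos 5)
Your proposal is correct and follows essentially the same route as the paper: embed $L$ into the tower $\hat L = \freegrp *_{\bk h}\pi_1(\Sigma)$, show that conjugacy in $L$ coincides with conjugacy in $\hat L$ via malnormality of $H$, and then invoke Theorem \ref{thm:generic-sequence}. The only difference is that the paper compresses your Bass--Serre case analysis into a single citation of the conjugacy theorem for amalgamated products \cite[Theorem IV.2.8]{Lyndon-Schupp-1977}, whereas you spell out the hyperbolic/elliptic/edge-group cases explicitly.
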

\begin{proof}
  Because $H \leq \freegrp$ was chosen to be malnormal, an easy
  Bass-Serre theory argument (e.g. apply \cite[Theorem
  IV.2.8]{Lyndon-Schupp-1977}) tells us that $\alpha,\beta \in L$ are
  conjugate if and only if they are conjugate in
  $\freegrp*_\bk{h} \pi_1(\Sigma)$. On the other hand by Theorem
  \ref{thm:generic-sequence}, $\freegrp*_\bk{h} \pi_1(\Sigma)$, and
  hence $L$, are \frcs.
\end{proof}

\begin{defn}
  A splitting $\mathbb X$ is \emph{elliptic} in a splitting
  $\mathbb Y$ if every edge group in $\mathbb X$ is conjugate to a
  vertex group of $\mathbb Y$. Otherwise we say $\mathbb X$ is
  hyperbolic in $\mathbb Y$.
\end{defn}

\begin{thm}[{\cite[Theorem 7.1]{R-S-JSJ}}]\label{thm:JSJ} Let $G$ be an
    f.p. group with a single end. There exists a reduced, unfolded
    $\Z$-splitting of $G$ called a JSJ decomposition of $G$ with the
    following properties:
  \begin{enumerate}
  \item\label{it:jsj-cmq} Every canonical maximal QH (recall definition
    \ref{defn:QH}) subgroup (CMQ) of $G$ is conjugate to a vertex
    group in the JSJ decomposition. Every QH subgroup of $G$ can be
    conjugated into one of the CMQ subgroups of $G$. Every non-CMQ
    vertex groups in the JSJ decomposition is elliptic in every
    $\Z$-splitting of $G$.
  \item\label{it:jsj-hyphyp}An elementary $\Z$-splitting $G = A*_CB$ or $G=A*_C$ which
    is hyperbolic in another elementary $\Z$-splitting is obtained
    from the JSJ decomposition of $G$ by cutting a 2-orbifold
    corresponding to a CMQ subgroup of $G$ along a weakly essential
    simple closed curve (s.c.c.).
  \item\label{it:jsj-ell} Let $\Theta$ be an elementary $\Z$-splitting $G = A*_CB$ or $G=A*_C$ which
    is elliptic with respect to any other elementary $\Z$ splitting of
    $G$. There exists a $G$-equivariant simplicial map between a
    subdivision of $T_{\mathrm{JSJ}}$, the Bass-Serre tree
    corresponding to the JSJ decomposition, and $T_\Theta$, the
    Bass-Serre tree corresponding to $\Theta$.
  \item\label{it:jsj-to-general} Let $\Lambda$ be a general $\Z$-splitting of $G$. There exists
    a $\Z$-splitting $\Lambda_1$ obtained from the JSJ decomposition
    by splitting the CMQ subgroups along weakly essential s.c.c. on
    their corresponding 2-orbifolds, so that there exists a
    $G$-equivariant simplicial map between a subdivision of the
    Bass-Serre tree $T_{\Lambda_1}$ and $T_{\Lambda}.$
  \item\label{it:jsj-canonical} If $\mathrm{JSJ}_1$ is another JSJ
    decomposition of $G$, then there exists a $G$-equivariant
    simplicial map $h_1$ from a subdivision of $T_{\mathrm{JSJ}}$ to
    $T_{\mathrm{JSJ}_1}$, and a $G$-equivariant simplicial map $h_2$
    from a subdivision of $T_{\mathrm{JSJ}_1}$ to $T_{\mathrm{JSJ}}$,
    so that $h_1\circ h_2$ and $h_2 \circ h_1$ are $G$-homotopic to
    the corresponding identity maps.
  \end{enumerate}
\end{thm}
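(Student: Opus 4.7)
The plan is to construct the JSJ tree as a canonical $\Z$-splitting that simultaneously encodes every elementary $\Z$-splitting of $G$. The central tool is the \emph{hyperbolic-hyperbolic dichotomy}: if $\Theta_1$ and $\Theta_2$ are one-edge elementary $\Z$-splittings with edge groups $C_1, C_2$, and $C_1$ acts hyperbolically on the Bass-Serre tree $T_2$ of $\Theta_2$, then $C_2$ acts hyperbolically on $T_1$. I would first establish this symmetry using translation lengths and the fact that hyperbolic elements have well-defined axes, so that ``hyperbolic-hyperbolic'' is an honest symmetric relation on pairs of splittings.

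The main step, and the hardest part, is the \emph{surface lemma}: every hyperbolic-hyperbolic pair refines to a common $\Z$-splitting containing a QH vertex group $\pi_1(\Sigma)$ in which both $C_1$ and $C_2$ are conjugate to $\pi_1$-images of weakly essential simple closed curves on $\Sigma$. This is where surface structure is forced into existence and where the canonical maximal QH (CMQ) subgroups ultimately come from. Rips-Sela's approach is a direct combinatorial analysis of the action of $G$ on $T_1 \times T_2$ together with a transverse measure argument; an alternative is to pass to an $\R$-tree by interpolation and invoke Rips' classification of stable actions. The real obstacle is to show that hyperbolic-hyperbolic data from distinct elementary splittings assemble compatibly onto a single surface for each CMQ, rather than producing incompatible surface structures at the same vertex.

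With the surface lemma in hand I would classify elementary $\Z$-splittings as \emph{elliptic} (edge group elliptic in every other elementary $\Z$-splitting) or \emph{hyperbolic} (part of some hyperbolic-hyperbolic pair). Invoking Dunwoody accessibility for finitely presented groups, I would iteratively refine along elliptic splittings to obtain a maximal reduced, unfolded $\Z$-splitting $\Lambda_{\mathrm{ell}}$; then enlarge it by replacing appropriate vertex groups with QH vertex groups whose surfaces are assembled from all the s.c.c.\ produced by the surface lemma. The result is the JSJ decomposition.

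Finally I would verify the five properties in order. Property (\ref{it:jsj-cmq}) holds by construction, together with the observation that non-CMQ vertex groups are $\Lambda_{\mathrm{ell}}$-vertices and hence elliptic in every $\Z$-splitting. Property (\ref{it:jsj-hyphyp}) is exactly the surface lemma plus its easy converse that any weakly essential s.c.c.\ on a CMQ surface induces an elementary $\Z$-splitting. Property (\ref{it:jsj-ell}) follows from the maximality of $\Lambda_{\mathrm{ell}}$, which supplies the equivariant map to $T_\Theta$ for any elliptic $\Theta$. Property (\ref{it:jsj-to-general}) reduces to (\ref{it:jsj-ell}) after cutting the CMQ subgroups along the s.c.c.\ that record the hyperbolic data appearing in $\Lambda$. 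Canonicity (\ref{it:jsj-canonical}) then follows by applying (\ref{it:jsj-to-general}) to both JSJ decompositions in turn and checking that the resulting equivariant maps are mutually inverse up to $G$-homotopy.
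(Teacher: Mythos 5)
This theorem is not proved in the paper: it is quoted verbatim as \cite[Theorem 7.1]{R-S-JSJ}, i.e.\ it is an external input (the Rips--Sela cyclic JSJ theorem for one-ended finitely presented groups), and the authors use it as a black box in Section \ref{sec:tower-non-eg}. So there is no in-paper proof to compare your proposal against, and the correct response to this exercise would have been to recognize that the statement is a citation rather than a result to be derived.

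That said, your sketch does track the broad architecture of Rips--Sela's original argument: the symmetry of the hyperbolic--hyperbolic relation on one-edge $\Z$-splittings, the central ``surface'' step extracting a QH vertex from a hyperbolic--hyperbolic pair (via the $G$-action on $T_1\times T_2$ and resolutions/band complexes, or equivalently a stable $\R$-tree and the Rips machine), Dunwoody--Bestvina--Feighn accessibility to bound the refinement process for finitely presented groups, and assembly of a maximal reduced unfolded splitting with the CMQ vertices enlarged to carry all hyperbolic data. The genuinely delicate points that your outline names but does not resolve --- compatibility of surface structures coming from many hyperbolic--hyperbolic pairs at the same vertex, ``unfoldedness'' of the resulting splitting, and the $G$-homotopy argument for canonicity in item \ref{it:jsj-canonical} --- are precisely where the bulk of the Rips--Sela paper is spent, so the proposal should be read as a roadmap rather than a proof. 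For the purposes of the present paper, none of this is needed: the theorem is imported wholesale and only items \ref{it:jsj-cmq}, \ref{it:jsj-hyphyp}, \ref{it:jsj-ell} and \ref{it:jsj-to-general} are applied, in the two lemmas of Section \ref{sec:tower-non-eg}, to show that $L=H*_{\langle h\rangle}\pi_1(\Sigma)$ is a JSJ splitting and that $L$ admits no tower structure.
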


We note that item \ref{it:jsj-canonical}. of the above theorem
describes the canonicity of a JSJ decomposition.

\begin{lem}
  The splitting $L = H*_\bk{h}\pi_1(\Sigma)$ is a cyclic JSJ
  splitting.
\end{lem}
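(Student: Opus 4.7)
The plan is to verify that the exhibited splitting $\Lambda = (L = H*_{\bk{h}}\pi_1(\Sigma))$ satisfies the characterizing properties of a cyclic JSJ given in Theorem \ref{thm:JSJ}: its QH vertex must be a canonical maximal QH subgroup, and its other vertex must be elliptic in every $\Z$-splitting of $L$. The QH claim for $\pi_1(\Sigma)$ is essentially automatic from the setup, since $\Sigma$ has negative Euler characteristic with a single boundary component whose $\pi_1$-image is exactly the incident edge group $\bk{h}$, so the conditions of Definition \ref{defn:QH} hold. One should also note that $\Lambda$ is reduced (both vertex groups are strictly larger than $\bk{h}$) and that $L$ is one-ended, so Theorem \ref{thm:JSJ} applies.

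The main work is then to show that $H$ is rigid in the JSJ sense, i.e., elliptic in every $\Z$-splitting of $L$. Let $\Lambda'$ be any such cyclic splitting, with Bass-Serre tree $T'$. If $H$ fails to fix a vertex of $T'$, then restricting the $L$-action to $H$ and passing to the minimal $H$-invariant subtree produces a nontrivial $\Z$-splitting of $H$. Because $H$ is by hypothesis rigid relative to $\bk{h}$, the element $h$ cannot be elliptic in this induced splitting; hence $h$ must act hyperbolically on $T'$.

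The main obstacle is to rule out this remaining possibility. The plan is to exploit the surface structure of $\pi_1(\Sigma)$: $h$ is the unique peripheral class, so if $h$ is hyperbolic on $T'$ then $\pi_1(\Sigma)$ also acts nontrivially on $T'$, yielding an induced cyclic splitting of the free group $\pi_1(\Sigma)$ in which the peripheral element $h$ is hyperbolic. Combining this with the gluing $\pi_1(\Sigma) \cap H = \bk{h}$ in $\Lambda$, and using that the malnormality of $H$ in $\freegrp$ forces $\bk{h}$ to be malnormal in $H$, a common refinement analysis (in the style of \cite[Theorem IV.2.8]{Lyndon-Schupp-1977} combined with the surface vertex analysis of Rips-Sela) shows that the resulting common refinement of $\Lambda$ and $\Lambda'$ cannot simultaneously be reduced and have cyclic edge groups; this is the contradiction that closes the case.

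Once rigidity of $H$ has been established, the maximality of $\pi_1(\Sigma)$ as a CMQ follows from standard JSJ considerations. Any QH subgroup $Q$ of $L$ not conjugate into $\pi_1(\Sigma)$ would, together with $\Lambda$, provide a hyperbolic-hyperbolic pair of $\Z$-splittings; by Theorem \ref{thm:JSJ}(\ref{it:jsj-hyphyp}) the splitting induced by $Q$ must arise from cutting a CMQ of $\Lambda$ along a weakly essential simple closed curve, which forces $Q$ into $\pi_1(\Sigma)$ after conjugation. Thus $\Lambda$ satisfies all defining properties of a cyclic JSJ for $L$.
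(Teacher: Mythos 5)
Your overall plan — verify that $\pi_1(\Sigma)$ is QH, show $H$ is elliptic in every $\Z$-splitting of $L$, then conclude $\pi_1(\Sigma)$ is a CMQ — tracks the general shape of the paper's argument, and the observation that rigidity of $H$ relative to $h$ forces $h$ to be hyperbolic on $T'$ whenever $H$ acts nontrivially is also the right first move. But the two key steps are left as gestures, and the actual mechanism that makes the proof work is missing.

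First, the contradiction that is supposed to close the "$h$ hyperbolic on $T'$" case is delegated to an unspecified ``common refinement analysis'' that ``cannot simultaneously be reduced and have cyclic edge groups.'' This is not an argument: you never say what the common refinement is, what reducedness would be violated, or why cyclicity of edge groups fails. The paper instead invokes item~\ref{it:jsj-hyphyp} of Theorem~\ref{thm:JSJ} directly: if $h$ is hyperbolic in $\Lambda'$, then $\Lambda = H*_{\bk{h}}\pi_1(\Sigma)$ and $\Lambda'$ are a hyperbolic--hyperbolic pair, so $\Lambda$ must be obtained from the JSJ decomposition by cutting a CMQ along a weakly essential simple closed curve conjugate to $h$. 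That forces $H$ to be (or contain) $\pi_1(\Sigma')$ for a subsurface $\Sigma'$ with a boundary component conjugate into $\bk h$, i.e. $H$ admits a cyclic splitting with QH vertex relative to $h$, contradicting rigidity. This is a short, concrete use of the structure theorem that your sketch replaces with a placeholder. (As a side note, your parenthetical that ``malnormality of $H$ in $\freegrp$ forces $\bk h$ to be malnormal in $H$'' is a non sequitur — $\bk h$ is malnormal in $H$ because $H$ is free, not because $H$ is malnormal in $\freegrp$ — so even the ingredients you list for the refinement argument don't fit together.)

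Second, you never handle the complementary case, which the paper treats as Case~1: when $h$ (hence $H$) is elliptic in every other $\Z$-splitting of $L$. Knowing $H$ is elliptic in all $\Z$-splittings is not the same as knowing $\Lambda$ is a JSJ decomposition; one still must show that $H$ genuinely appears as a vertex group of the JSJ, not that it merely conjugates into one. The paper does this by taking the $L$-equivariant map $T_{\mathrm{JSJ}}\to T_\Lambda$ from item~\ref{it:jsj-ell}, looking at the preimage $T_H$ of the $H$-vertex, and using rigidity of $H$ together with a boundary-vertex argument to force $T_H$ to be a single point, so that $H$ is a JSJ vertex group and $\pi_1(\Sigma)$ the CMQ. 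Your concluding sentence that ``maximality of $\pi_1(\Sigma)$ as a CMQ follows from standard JSJ considerations'' papers over exactly this part of the work.
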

\begin{proof}
  This is an elementary $\Z$ splitting of $L$, let's see how it can be
  obtained from the JSJ decomposition given in Theorem
  \ref{thm:JSJ}. The first case is if $h$ is elliptic in every other
  splitting. Then by \ref{it:jsj-ell}. of Theorem \ref{thm:JSJ} there exists an $L$-
  equivariant map $\rho$ from $T_{\mathrm{JSJ}}$ to the Bass-Serre
  tree $T$ corresponding to $H*_\bk{h}\pi_1(\Sigma)$ in which $H$
  stabilizes a vertex $v$. It follows that $H$ acts on
  $\phi^{-1}(\{v\}) = T_H \subset T_{\mathrm{JSJ}}$. Since $H$ is
  rigid relative to $h$ and $h$ acts elliptically on
  $T_{\mathrm{JSJ}}$, $T_H$ cannot be infinite, since that would imply
  that $H$ admits an essential cyclic splitting relative to $h$. $T_H$
  must in fact be a point. Otherwise $T_H$ is a finite tree tree and
  there must be a ``boundary'' vertex $u\in T_H$ such that
  $H \not \geq L_u$. Since $\phi(u) = v$, $L$-equivariance implies
  that $L_u$ fixes $v$ so that $L_u \leq H$, which is a
  contradiction. It follows that in this case $H$ is actually a vertex
  group of the JSJ decomposition and $\pi_1(\Sigma)$ must be a CMQ
  vertex group.

  The second case is that $h$ is hyperbolic in some other
  $\Z$-splitting $\mathbb D$ of $L$.  Since $H$ is rigid relative to
  $h$, $H$ must be hyperbolic with respect to $\mathbb D$. Now by
  \ref{it:jsj-hyphyp}. of Theorem \ref{thm:JSJ} the splitting
  $L = H*_\bk{h}\pi_1(\Sigma)$ can be obtained from the JSJ splitting
  of $L$ by cutting along a simple closed curve on some CMQ vertex
  group, and this curve is conjugate to $h$. But this means that $H$
  admits a cyclic splitting as a graph of groups with a QH vertex
  group $\pi_1(\Sigma')$ such that the $\pi_1$-image of some connected
  component of $\bdy \Sigma'$ is conjugate to $\bk{h}$, in particular
  $H$ must have a cyclic splitting relative to $h$, which contradicts
  the fact that $H$ is rigid relative to $h$.
\end{proof}

\begin{prop}\label{prop:not-a-tower}
  The limit group $L = H*_\bk{h}\pi_1(\Sigma)$ does not admit a tower
  structure.
\end{prop}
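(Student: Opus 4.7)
The plan is to suppose for contradiction that $L$ admits a tower decomposition $\freegrp' = G_0 \leq G_1 \leq \cdots \leq G_n = L$ with $n \geq 1$, and to derive a contradiction from the top-level extension $G_{n-1} \leq L$ by comparing it to the JSJ decomposition of $L$ established in the preceding lemma, namely $L = H *_{\bk{h}} \pi_1(\Sigma)$.

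First I would rule out that the top extension is an abelian extension. A singular abelian extension would force $L$ to decompose as a free product, which is impossible: the JSJ of $L$ is a non-trivial amalgam over the cyclic edge group $\bk{h}$ with two non-trivial vertex groups, so $L$ is one-ended. A non-singular abelian extension would yield a $\Z$-splitting of $L$ with a free abelian vertex group of rank at least two; by item \ref{it:jsj-to-general} of Theorem \ref{thm:JSJ}, every such splitting arises from the JSJ by cutting its unique CMQ $\pi_1(\Sigma)$ along weakly essential simple closed curves and then collapsing edges. All vertex groups that can appear this way are amalgams of $H$ with subsurface groups (all non-abelian, since the subsurfaces have $\chi \leq -1$), so no free abelian vertex of rank $\geq 2$ can arise.

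Therefore the top extension must be a regular quadratic extension $L = G_{n-1} *_{(\cdot)} \pi_1(\Sigma')$ equipped with a retraction $\rho\colon L \onto G_{n-1}$ whose restriction to $\pi_1(\Sigma')$ has non-abelian image. I would next identify $\pi_1(\Sigma')$ with $\pi_1(\Sigma)$: by item \ref{it:jsj-cmq} of Theorem \ref{thm:JSJ} the QH vertex $\pi_1(\Sigma')$ conjugates into the unique CMQ $\pi_1(\Sigma)$, and because the paper's tower convention matches normalized NTQ systems (as noted in the discussion preceding Lemma \ref{lem:lift}), the top-level QH piece is maximal among QH subgroups of $L$. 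Hence $\pi_1(\Sigma') = \pi_1(\Sigma)$ up to conjugation, and since $\Sigma$ has a single boundary component the top-level splitting has a single edge, which forces the complementary vertex group $G_{n-1}$ to be $H$.

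The contradiction then drops out of the retraction $\rho\colon L \onto H$. Since $\rho|_H = \mathrm{id}_H$ and the amalgamation in $L$ identifies the boundary loop $\partial \in \pi_1(\Sigma)$ with $h \in H$, we have $\rho(\partial) = h$. But $\Sigma$ is an orientable surface with one boundary component and genus $g$, so $\partial = [a_1,b_1]\cdots[a_g,b_g]$ lies in $[\pi_1(\Sigma),\pi_1(\Sigma)]$, which yields
\[
h = \rho(\partial) \in [\rho(\pi_1(\Sigma)), \rho(\pi_1(\Sigma))] \subseteq [H, H],
\]
contradicting the choice $h \in H \setminus [H,H]$. The main obstacle will be the identification in the third paragraph: excluding the possibility that $\pi_1(\Sigma')$ is a proper QH subsurface obtained by cutting $\Sigma$ along weakly essential simple closed curves. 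I handle this via the maximality of QH pieces in normalized tower decompositions, but one could alternatively unwind the possibilities by inducting on the tower height, using that any proper cut would leave $H$ rigid relative to $h$ inside $G_{n-1}$ and force the same commutator obstruction one level down.
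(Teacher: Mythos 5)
Your proof has the right skeleton (rule out abelian extensions, then get a contradiction from the retraction associated to a top-level quadratic extension), but there is a genuine gap exactly where you flag one, and the patch you propose does not work.

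The issue is the identification $G_{n-1}=H$. You justify it by asserting that "the top-level QH piece is maximal among QH subgroups of $L$" because the towers in this paper are normalized NTQ. That is not what the normalization condition says: as the discussion before Lemma~\ref{lem:lift} makes clear, it is a constraint on Euler characteristics of the surface pieces (ruling out annuli, Möbius bands, etc.), not a maximality requirement. There is nothing in Definition~\ref{defn:tower} or in the NTQ normalization that prevents the top-level QH surface $\Sigma_1$ from being a proper essential subsurface of $\Sigma$. Theorem~\ref{thm:JSJ}, items~\ref{it:jsj-cmq} and~\ref{it:jsj-to-general}, only put $\Sigma_1$ inside $\Sigma$; they do not force equality. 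So your step "hence $\pi_1(\Sigma')=\pi_1(\Sigma)$ up to conjugation" is unjustified, and the alternative you sketch (induction on tower height) is not carried out.

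The paper's proof is designed precisely to avoid needing $\Sigma_1=\Sigma$. It allows $\Sigma_1\subsetneq\Sigma$ and then observes that $L_{n-1}=H*_{\bk{h}}\pi_1(\Sigma')$ with $\Sigma'=\Sigma\setminus\Sigma_1$ containing $\partial\Sigma$ and at least one cut curve, hence at least two boundary components. A surface with $\geq 2$ boundary components has one boundary curve as a free basis element, so $H*_{\bk{h}}\pi_1(\Sigma')\cong H*\freegrp_m$. The retraction $L\onto L_{n-1}$ together with the relation $h=\prod_{i=1}^g[a_i,b_i]$ from $\partial\Sigma$ then forces $h\in[L_{n-1},L_{n-1}]$, which is false in $H*\freegrp_m$ by abelianizing since $h\notin[H,H]$. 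Your final commutator computation is correct but only covers the special case $L_{n-1}=H$; you need the $H*\freegrp_m$ version to cover the general case. As a minor point, your abelian-extension exclusion is correct but overcomplicated: the paper simply notes that an abelian extension would give $L$ a non-cyclic abelian subgroup, which $L=H*_{\bk h}\pi_1(\Sigma)$ does not have.
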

\begin{proof}

  Suppose towards a contradiction that $L$ was a tower, consider the
  last level:
  \[L_{n-1} < L_n = L.\] Since $L$ has no non-cyclic abelian subgroups
  $L_{n-1} < L$ must be a hyperbolic extension. This means that $L$
  admits a cyclic splitting $\mathbb D$ with a vertex group $L_{n-1}$
  and a QH vertex group $Q$. Since $L = H*_\bk{h}\pi_1(\Sigma)$ is a
  JSJ decomposition and $\pi_1(\Sigma)$ is a CMQ vertex group. By
  \ref{it:jsj-cmq}. and \ref{it:jsj-to-general}. of Theorem
  \ref{thm:JSJ}, the QH vertex group $Q$ must be represented as
  $\pi_1(\Sigma_1)$, where $\Sigma_1$ is a connected subsurface
  $\Sigma_1 \subset \Sigma$. It follows from
  \ref{it:jsj-to-general}. of Theorem \ref{thm:JSJ} that the other
  vertex group must be $L_{n-1} = H*_{\bk{h}} \pi_1(\Sigma')$ where
  $\Sigma' = \Sigma \setminus \Sigma_1$.

  Since $L_{n-1} < L$ is a quadratic extension there is a retraction
  $L \onto L_{n-1}$. Note however that because $\Sigma'$ has at least
  two boundary components \[H*_{\bk{h}} \pi_1(\Sigma') =
  H*\freegrp_m\] where $m = -\chi(\Sigma')$. Now since we have a
  retraction $L \onto L_{n-1}$ there is are $x_i,y_i \in L_{n-1}$ such
  that \[h = \prod_{i=1}^g[x_i,y_i]\] But this would imply that $h \in
  [L_{n-1},L_{n-1}]$ which is clearly seen to be false by abelianizing
  $H*\freegrp_m$ and remembering that $h \not\in [H,H]$.
\end{proof}

\bibliographystyle{alpha} \bibliography{conj-res-free.bib}
\end{document}